\providecommand{\noopsort}[1]{} 
\theoremstyle{plain}
\newtheorem{theorem}{Theorem}[section]
\newtheorem{corollary}[theorem]{Corollary}
\newtheorem{lemma}[theorem]{Lemma}
\newtheorem{proposition}[theorem]{Proposition}
\newtheorem{conjecture}[theorem]{Conjecture}
\newtheorem{question}[theorem]{Question}
\theoremstyle{definition}
\theoremstyle{remark}
\newtheorem{remark}[theorem]{Remark}
\numberwithin{equation}{section}
\title[Counterexamples to the non-simply connected DSC]{Counterexamples to the non-simply connected Double Soul Conjecture}
\author{Jason DeVito}
\date{}
\begin{document}


\begin{abstract} A double disk bundle is any smooth closed manifold obtained as the union of the total spaces of two disk bundles, glued together along their common boundary.  The Double Soul Conjecture asserts that a closed simply connected manifold admitting a metric of non-negative sectional curvature is necessarily a double disk bundle.  We study a generalization of this conjecture by dropping the requirement that the manifold be simply connected.  Previously, a unique counterexample was known to this generalization, the Poincar\'e dodecahedral space $S^3/I^\ast$.  We find infinitely many $3$-dimensional counterexamples, as well as another infinite family of flat counterexamples whose dimensions grow without bound.
\end{abstract}
\bigskip

\maketitle


\section{Introduction}

Suppose $B_-$ and $B_+$ are closed smooth manifolds and that $DB_\pm\rightarrow B_\pm$ are disk bundles over them, possibly of different ranks.  Suppose in addition that the boundaries $\partial DB_\pm$ of $DB_\pm$ are diffeomorphic, say via a diffeomorphism $f:\partial DB_-\rightarrow \partial DB_+$.  Then we may form a smooth closed manifold $M = DB_- \cup_f DB_+$.  A manifold diffeomorphic to one obtained from this construction is called a \textit{double disk bundle}.  For example, $\mathbb{R}P^2$ is a double disk bundle, for it is a union of a disk and a closed M\"{o}bius band.  That is, $\mathbb{R}P^2$ is a union of a trivial $2$-disk bundle over a point together with non-trivial $1$-disk bundle over $S^1$.

Double disk bundles arise naturally in many diverse fields of geometry and topology.  We refer the reader to the introduction of \cite{DeGGKe} for numerous examples of this.  Our main interest stems from Grove's Double Soul Conjecture \cite{Gr}.

\begin{conjecture}[Double Soul Conjecture]\label{con:DSC}  Suppose $M$ is a closed simply connected manifold which admits a Riemannian metric of non-negative sectional curvature.  Then $M$ is a double disk bundle.
\end{conjecture}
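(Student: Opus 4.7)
The plan is to attack this conjecture using the structure theory of non-negatively curved manifolds, stratified by the cohomogeneity of the isometry group $\Isom(M)$ of a well-chosen invariant metric of non-negative curvature. The rough idea is to reduce the problem to settings where a double disk bundle decomposition is forced by symmetry, and then to propagate those decompositions to the general case using critical point theory.

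First I would handle the low-cohomogeneity regime, since the verification is essentially automatic there. If a subgroup $G \subseteq \Isom(M)$ acts transitively on $M$, then $M$ is a simply connected non-negatively curved homogeneous space, and each item in the known classification is visibly a double disk bundle. If $G$ acts with cohomogeneity one, a classical argument shows that $M$ decomposes as the union of tubular neighborhoods of the two singular orbits $B_\pm$, which immediately exhibits $M$ as a double disk bundle. One would then try to extend the argument to cohomogeneity two or three by analyzing the orbit space and exploiting the structure of the boundary strata.

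For the general case, my main tool would be critical point theory for distance functions in the sense of Grove--Shiohama, combined with the Cheeger--Gromoll Soul Theorem applied to suitable open subsets of $M$. Given a point $p \in M$, the distance function $d_p$ has no critical points in the complement of some compact subset, and one would like to take a maximal such critical set $B_+$ and a minimal one $B_-$ (for instance, a totally convex soul-like set near $p$) as the two bases of the desired disk bundles. The task is then to show that the two complementary regions each deformation retract onto $B_\pm$ through smooth disk bundle structures. Simple connectivity should enter crucially when excluding pathological local behavior at the critical sets.

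The hard part, and the reason the conjecture has resisted attack since Grove's original formulation, is that there is no uniformly applicable construction of the two candidate souls $B_\pm$ in higher cohomogeneity. In each known verification (biquotients, cohomogeneity one manifolds, rank one and higher rank symmetric spaces) the double disk bundle decomposition arises from extra structure, not from the curvature alone. A uniform proof would likely require a genuinely new structural result for closed non-negatively curved manifolds, analogous to but strictly stronger than the Soul Theorem, giving a canonical pair of totally convex subsets whose normal exponential maps surject onto $M$. Absent such a result, the plan above is necessarily a scaffold rather than a concrete program, and the main obstacle is precisely the construction of the souls themselves.
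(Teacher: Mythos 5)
This statement is a \emph{conjecture}, not a theorem, and the paper does not prove it; indeed, the whole point of the paper is to study what happens when the simple-connectivity hypothesis is dropped, and to exhibit counterexamples to that generalization. So there is no ``paper's own proof'' to compare against. Your proposal is, as you candidly admit in its final paragraph, a scaffold rather than an argument: it rehearses the known verification in the homogeneous and cohomogeneity-one cases, gestures at Grove--Shiohama critical point theory and the Soul Theorem, and then concedes that no construction of the two candidate souls $B_\pm$ is available in general. That concession is the correct conclusion --- the conjecture remains open, and no proof (by you or anyone else) is currently known. Be careful, however, about one overstatement: it is not known that ``each item in the known classification'' of simply connected non-negatively curved homogeneous spaces ``is visibly a double disk bundle.'' The paper cites verification only for homogeneous spaces of dimension at most $10$ (and for biquotients in dimension at most $7$), so even the homogeneous case is not settled in full generality. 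A reviewer of a genuine submission would flag that sentence as claiming more than is established.
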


Evidence for this conjecture includes the fact that cohomogeneity one manifolds (and free isometric quotients by a sub-action of the cohomogeneity one action \cite{Wi}), which are one of two main building blocks for non-negatively curved manifolds, admit such a structure \cite{Mo, GGZa}.  In addition, Cheeger \cite{Ch} showed that the connect sum of two compact rank one symmetric spaces (CROSS) admits a metric of non-negative sectional curvature.  As a CROSS with a small open ball removed has the structure of a disk bundle, these manifolds also verify the double soul conjecture.  In addition, Cheeger and Gromoll's Soul Theorem \cite{ChGr} gives an analogous theorem for non-compact complete Riemannian manifolds of non-negative sectional curvature.  The conjecture has also been verified for many other examples, including all known simply connected positively curved manifolds \cite[Theorem 3.3]{DeGGKe}, simply connected biquotients in dimension at most 7 \cite{GAGu}, and simply connected homogeneous spaces of dimension at most $10$ \cite{GAGu}.  We recall that a biquotient is the quotient of a Riemannian homogeneous space by a free isometric action, and comprise the other main building block of non-negatively curved manifolds.

The conjecture also implies some classification results.  For example, if true, then it would follow that our known list of non-negatively curved simply connected $4$ and $5$-dimensional manifolds is complete \cite[Theorem 1.1]{GeRa},\cite[Theorem B]{DeGGKe}.

Grove \cite{Gr} notes the the natural generalization of Conjecture \ref{con:DSC} to non-simply connected manifolds is false: the Poincar\'e dodecahedral space $S^3/I^\ast$ admits a positively curved Riemannian metric, but does not support a double disk bundle structure.  However, this was previously the only known counterexample to the generalized conjecture.  As such, it is natural to search for more, with various topological and geometric properties.  Our main result supplies infinitely many counterexamples to the generalized conjecture, on opposite ends of the non-negative curvature landscape.

\begin{theorem}\label{thm:main}There are infinitely many closed Riemannian $3$-manifolds of positive sectional curvature which are not double disk bundles.  In addition, there are infinitely many closed flat manifolds which are not double disk bundles.
\end{theorem}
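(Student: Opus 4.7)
The plan is to treat the two assertions of the theorem separately, using different techniques in each case.

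For the three-dimensional positively curved counterexamples, I would begin by invoking Hamilton's theorem (and its extensions): every closed $3$-manifold of positive sectional curvature is isometric to a spherical space form $S^3/\Gamma$ with $\Gamma \subset \SO(4)$ a finite subgroup acting freely on $S^3$. The question thus reduces to deciding which spherical space forms are double disk bundles. I would carry out a complete case analysis based on the dimensions of the souls $B_\pm$, each lying in $\{0,1,2\}$: if a soul is a point, the corresponding piece is a $3$-disk bounded by $S^2$; if it is a circle, the piece is a solid torus or solid Klein bottle; and if it is a closed surface, the piece is a (possibly twisted) interval bundle, whose boundary is then either a double cover of the soul or two copies of it. Matching boundary identifications leaves an explicit short list of possible total spaces, namely $S^3$, $\RP^3$, lens spaces, prism manifolds, certain torus bundles, and Seifert fibered spaces over $S^2$ or $\RP^2$ with very few exceptional fibers. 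Consequently any spherical space form whose Seifert fibration has three or more exceptional fibers is ruled out. Choosing the family $\Gamma_n = I^\ast \times \Z_n$ for $n$ coprime to $120$ (so that the product acts freely on $S^3 = \SU(2)$ by left multiplication and Hopf rotation), one gets pairwise non-homeomorphic quotients, each Seifert fibered over an orbifold with three cone points of orders related to $2, 3, 5$; these then provide infinitely many $3$-dimensional positively curved counterexamples.

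For the flat counterexamples in growing dimensions, I would exhibit an explicit family of Bieberbach manifolds with dimensions tending to infinity and obstruct the double disk bundle structure at the level of fundamental groups. A natural candidate is the family of generalized Hantzsche--Wendt manifolds $HW^n$ in odd dimensions $n \geq 3$: these are orientable flat manifolds with first Betti number zero and holonomy $(\Z/2)^{n-1}$. Supposing $HW^n = DB_- \cup DB_+$, Van Kampen's theorem forces $\pi_1(HW^n)$ to be an amalgamated free product $\pi_1(B_-) *_{\pi_1(\partial DB_\pm)} \pi_1(B_+)$. Because $HW^n$ is aspherical, so are $DB_\pm$ and hence the souls themselves, and I would argue that the rigid algebraic structure of the Bieberbach group (together with the specific holonomy action on the translation lattice and the vanishing of $b_1$) is incompatible with any such amalgamation over a lower-dimensional boundary.

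The main obstacle I anticipate in the $3$-dimensional argument is keeping the case analysis exhaustive---in particular, ensuring that no non-standard Seifert or torus-bundle structure on $S^3/\Gamma_n$ gives a decomposition missed by the general list, and that the interaction between the two pieces along the gluing surface is fully controlled. In the flat case, the main difficulty is pinpointing a sufficiently sharp algebraic obstruction to a double disk bundle decomposition of a Bieberbach manifold with small Betti numbers, combining Van Kampen's theorem with Bieberbach rigidity and the constraints imposed by asphericity on the souls $B_\pm$.
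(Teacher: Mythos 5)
Your three-dimensional argument follows essentially the same route as the paper: reduce to spherical space forms, enumerate the disk bundles whose boundary can serve as the common regular leaf, and determine which quotients $S^3/\Gamma$ can arise. The paper executes this by computing $\pi_1$ via Seifert--van Kampen and matching against the classification of spherical space form groups, concluding that precisely $S^3$, lens spaces, and prism manifolds are double disk bundles. Your rephrasing of the exclusion in terms of Seifert data contains a slip: prism manifolds \emph{are} double disk bundles yet have Seifert fibrations over $S^2$ with three exceptional fibers of orders $(2,2,n)$, so ``three or more exceptional fibers'' is not the right dividing line; the exclusion is only for bases of type $S^2(2,3,3)$, $S^2(2,3,4)$, $S^2(2,3,5)$. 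Your proposed family $I^\ast \times \Z_n$ (with $\gcd(n,120)=1$) does lie on the excluded side, so the end conclusion matches the paper's (which also lists the three binary polyhedral quotients as the homogeneous examples).

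The flat part has a genuine gap, and your proposed family is already wrong in the lowest dimension. The paper's argument is a concrete chain: asphericity forces \emph{both} fiber sphere dimensions to vanish (Proposition~\ref{prop:flat}, via a Gysin/Mayer--Vietoris argument in the contractible universal cover), which by Proposition~\ref{prop:onecodim0} forces $M$ to have a nontrivial connected double cover, hence a surjection $H_1(M)\twoheadrightarrow \Z/2\Z$. Consequently a flat manifold with $H_1(M)=0$ cannot be a double disk bundle, and Belegradek's construction produces such manifolds in arbitrarily large dimension by taking the commutator subgroup of a Bieberbach group with perfect holonomy (e.g.\ $A_n$). The Hantzsche--Wendt manifolds $HW^n$ do not fit this criterion: since the holonomy is $(\Z/2\Z)^{n-1}$, the five-term exact sequence gives $H_1(HW^n)\twoheadrightarrow (\Z/2\Z)^{n-1}\neq 0$, so they certainly admit a nontrivial double cover and the obstruction above does not see them. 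Worse, the $3$-dimensional Hantzsche--Wendt manifold is covered by $T^3$ and therefore \emph{is} a double disk bundle by the same reference the paper cites in Remark~\ref{rem:nonneg}. Your proposed alternative obstruction --- that the Bieberbach group structure is ``incompatible with any such amalgamation'' --- is not carried out, and as stated would need to account for the fact that asphericity forces the decomposition to be an amalgam of two Bieberbach groups over a common index-$2$ subgroup, a configuration that is far from obviously impossible in general. The missing idea is precisely the paper's reduction to codimension one followed by the double-cover obstruction, together with the choice of examples with \emph{trivial}, not merely finite, first homology.
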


The $3$-manifold family consists of infinitely many non-trivial isometric quotients of a round $S^3$.  The homogeneous spaces $S^3/I^\ast$ (which was first found by Grove), $S^3/O^\ast, S^3/T^\ast$, where $I^\ast, O^\ast$, and $T^\ast$ are the binary isocosahedral, octahedral, and tetrahedral groups are among these examples.

In fact, $S^3/I^\ast$, $S^3/O^\ast$, and $S^3/T^\ast$ are the only homogeneous spaces among our examples.  It is thus natural to wonder if there are more.  This leads to the obvious question:

\begin{question}\label{q:homo}  Are there infinitely many homogeneous spaces which are not double disk bundles?
\end{question}

Given that the three homogeneous examples of Theorem \ref{thm:main} are quotients of $S^3$, one is tempted to answer Question \ref{q:homo} by looking at homogeneous quotients of spheres of higher dimension.  However, we prove that $S^3/I^\ast$, $S^3/O^\ast$, and $S^3/T^\ast$ are the only homogeneous quotients of a sphere, in any dimension, which are not double disk bundles; see Proposition \ref{prop:cantwork}.  

The other infinite family, the closed flat manifolds, are precisely those with trivial first homology group.  The construction of such flat manifolds is rather abstract, so we have been unable to determine which dimensions these examples appear.  However, we can show they exist in arbitrarily large dimensions.

We stress that all of our examples have non-trivial fundamental groups, so the Double Soul Conjecture remains open.  In fact, all of our examples have non-nilpotent fundamental groups, the the generalized Double Soul Conjecture is still open for nilpotent manifolds.

We now give an outline of the proof of Theorem \ref{thm:main}, beginning with the three-dimensional examples.  We first prove that if $M^3$ has a metric of positive sectional curvature and is a double disk bundle, then it must have a double disk bundle structure where the common boundary $\partial DB_-\cong \partial DB_+$ is diffeomorphic to a sphere $S^2$ or to a torus $T^2$.  We then classify all disk bundles whose total space has boundary diffeomorphic to $S^2$ or $T^2$, and then consider all possible ways of gluing these together.  The double disk bundle decomposition lends itself to the use of the Seifert-van Kampen Theorem, so we are able to compute presentations for all the resulting fundamental groups.  The end conclusion is that a positively curved $M^3$ admits a double disk bundle decomposition if and only if it is a lens space or a particular $\mathbb{Z}/2\mathbb{Z}$ quotient of a lens space, a so-called prism manifold.  From the known classification of fundamental groups of spherical $3$-manifolds \cite[Section 7.5]{Wo}, we obtain infinitely many examples which are not double disk bundles.  It is worth noting that the examples we find are the only $3$-dimensional counterexamples to the double soul conjecture, even under the weaker assumption that $M$ has a Riemannian metric of non-negative sectional curvature; see Remark \ref{rem:nonneg}.

For the flat examples, enumerating all the possibilities for the common boundary $\partial DB_-\cong \partial DB_+$ is not feasible, so we proceed differently.   We first show in Proposition \ref{prop:flat} that for any manifold covered by a contractible manifold, any double disk bundle decomposition must have both disk bundles of rank $1$.   On the other hand, we also establish (Proposition \ref{prop:onecodim0}) that if a manifold admits a double disk bundle structure with at least one double disk bundle has rank $1$, then the manifold must have a non-trivial double cover, which in turn implies that the first homology group surjects onto $\mathbb{Z}/2\mathbb{Z}$.  Thus, any flat manifold with trivial first homology group cannot be a double disk bundle.  Such flat manifolds have been constructed by Igor Belegradek \cite{Be}, providing the examples.

An outline of the paper follows.  In Section 2, we cover the required background and set up notation.  Section 3 contains general results on the topology of double disk bundles especially in the case where at least one disk bundle has rank $1$.  In Section 4, we classify the non-negatively curved $3$-manifolds which are double disk bundles, finding that some positively curved examples are not double disk bundles.  Finally, Section 5 contains the results concerning flat manifolds.

The research is partially supported by NSF DMS-2105556.  We are grateful for the support.  We would also like to thank Martin Kerin and Karsten Grove for numerous comments on an earlier version of this article.

\section{Background and Notation}  Suppose $B_-$ and $B_+$ are closed manifolds and that $D^{\ell_\pm + 1} \rightarrow DB_\pm\rightarrow B_\pm$ are disk bundles.  We assume their boundaries are diffeomorphic, say by a diffeomorphism $f:\partial DB_-\rightarrow \partial DB_+$.  Then we can form the closed manifold $M = DB_-\cup_f DB_+$ by gluing $DB_-$ and $DB_+$ along their boundary.  A manifold obtained via this construction is called a \textit{double disk bundle}.

Restricting the projection maps to their respective boundaries, we obtain sphere bundles $S^{\ell_{\pm}}\rightarrow \partial DB_\pm\rightarrow B_\pm$.  The numbers $\ell_\pm\geq 0$ will always refer to the dimension of these fiber spheres.  We will use $L$ to denote the diffeomorphism type of the common boundary.  We will borrow language from the field of Singular Riemannian Foliations, and refer to $L$ as the regular leaf and the $B_\pm$ as the singular leaves.

As was shown in \cite[Proposition 4.1]{DeGGKe}, if a connected closed  manifold $M$ admits a double disk bundle decomposition, then it necessarily admits one where both $B_\pm$ are connected.   Thus we can and will always assume that in any double disk bundle decomposition, both singular leaves $B_\pm$ are connected.  Using the sphere bundles $S^{\ell_\pm}\rightarrow L\rightarrow B_\pm$, the condition that both $B_\pm$ are connected implies that $L$ has at most $2$ components, and that $L$ is connected unless $B_-$ and $B_+$ are diffeomorphic, $\ell_- = \ell_+ = 0$, and $L\cong S^0\times B_-\cong S^0\times B_+$.

The decomposition of $M$ into two disk bundles is ideal for applying the Mayer-Vietoris sequence in cohomology, as well as the Seifert-van Kampen theorem for fundamental groups, at least when $L$ is connected.  In this context, we note that contracting the fiber disks in either $DB_\pm$ provides a deformation retract of $DB_\pm$ to $B_\pm$, and the inclusion map $L\cong \partial DB_\pm\subseteq DB_\pm$ becomes homotopic to the sphere bundle projection $L\rightarrow B_\pm$ under this deformation retract.

\section{Some general structure results for double disk bundles}

In this section, we will collect several needed facts regarding the relationship between the fiber sphere dimensions $\ell_\pm$ and coverings.  We begin with some general structure results where at least one $\ell_\pm = 0$.

\begin{lemma}\label{lem:involution}  Suppose $S^{0}\rightarrow L\rightarrow B$ is a sphere bundle with $\ell = 0$ and $B$ a connected smooth manifold.  There is a smooth free involution $\sigma:L\rightarrow L$ with $L/\sigma$ diffeomorphic to $B$.

\end{lemma}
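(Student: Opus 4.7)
The plan is to observe that an $S^0$-sphere bundle is essentially the same data as a principal $\Z/2$-bundle, because $\mathrm{Diff}(S^0) \cong \Z/2$ with the non-trivial element being the fiberwise swap $\tau\colon \pm 1 \mapsto \mp 1$. The desired involution $\sigma$ will then be the ``fiberwise antipodal map,'' defined locally on trivializations and then glued.

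To carry this out, I would choose a trivializing open cover $\{U_\alpha\}$ of $B$ for the bundle $p\colon L\to B$, with transition cocycle $g_{\alpha\beta}\colon U_\alpha \cap U_\beta \to \Z/2 \cong \mathrm{Diff}(S^0)$, so that $L$ is obtained from $\bigsqcup_\alpha U_\alpha \times S^0$ by identifying $(x,s)_\alpha$ with $(x, g_{\alpha\beta}(x)\cdot s)_\beta$ on each overlap. On each chart, define $\sigma_\alpha(x,s) := (x, \tau(s))$. The local involutions $\sigma_\alpha$ agree on overlaps because multiplication by $g_{\alpha\beta}(x)\in\Z/2$ commutes with $\tau$ (as $\Z/2$ is abelian), so they assemble into a well-defined global smooth map $\sigma\colon L\to L$. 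Since $\tau$ is a free involution, so is $\sigma$; and since $p$ is $\sigma$-invariant by construction, it descends to a map $\bar p\colon L/\sigma \to B$ that in each local trivialization is identified with the projection $(U_\alpha\times S^0)/\tau \to U_\alpha$, and is therefore a diffeomorphism.

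The only genuine point requiring verification is the compatibility of the local $\sigma_\alpha$ on overlaps, and this is the single place where the hypothesis $\ell=0$ is essential: it is precisely the commutativity of $\Z/2$ (equivalently, the centrality of $\tau$ in $\mathrm{Diff}(S^0)$) that lets the local involutions glue. For $\ell \geq 1$ no analogous canonical fiberwise involution exists on a general sphere bundle, which is consistent with the fact that the conclusion of the lemma is specific to the rank-$1$ disk bundle setting used elsewhere in this section.
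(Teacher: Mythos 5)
Your proof is correct, but it takes a genuinely different route from the paper. The paper argues via covering space theory: it splits into two cases according to whether $L$ is connected, noting that a disconnected $L$ is $S^0\times B$ (so $\sigma$ just swaps sheets), while a connected $L$ corresponds to an index-$2$ subgroup of $\pi_1(B)$, which is automatically normal, so the covering $L\to B$ is regular with deck group $\mathbb{Z}/2\mathbb{Z}$, and $\sigma$ is taken to be the nontrivial deck transformation. You instead work directly with a trivializing cover and the $\mathbb{Z}/2$-valued transition cocycle, define $\sigma$ locally as the fiberwise swap, and observe that the local definitions glue precisely because $\mathbb{Z}/2\cong\mathrm{Diff}(S^0)$ is abelian (so the swap is central). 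Both approaches are sound. Yours is a single unified argument with no case split, is more explicitly constructive, and makes transparent where $\ell=0$ is used (the structure group is abelian, so a canonical fiberwise involution exists, something that fails for general $S^\ell$-bundles with $\ell\ge 1$). The paper's argument is shorter to state and leans on standard covering-space facts rather than cocycle bookkeeping. One small expository caveat: your opening sentence, that an $S^0$-bundle ``is essentially the same data as a principal $\mathbb{Z}/2$-bundle,'' is really a restatement of the lemma's conclusion rather than a starting point; but your actual argument does not rely on that assertion and proves it honestly from the cocycle description, so there is no circularity.
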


\begin{proof}Because $S^0$ consists of two points, the sphere bundle is nothing but a double cover.  If $L$ is disconnected, it follows that $L\cong S^0\times B$ and the required involution $\sigma$ simply interchanges the two copies of $B$.

On the other hand, if $L$ is connected, the covering $L\rightarrow B$ is characterized by an index $2$-subgroup of $\pi_1(B)$, which is necessarily normal.  Hence, the covering is regular, so the deck group is isomorphic to $\mathbb{Z}/2\mathbb{Z}$.  Then one can take $\sigma$ to be the non-trivial element of the deck group.

\end{proof}

\begin{proposition} \label{prop:onecodim0}  Suppose $M$ is a connected manifold and  $M = DB_-\cup_f DB_+$ is a double disk bundle with $\ell_- = 0$.  Then $M$ admits a non-trivial double cover of the form $\overline{M} = DB_+\cup_g DB_+$ for some diffeomorphism $g:L\rightarrow L$.  That is, $\overline{M}$ has a double disk bundle decomposition where each half is a copy of $DB_+$.

\end{proposition}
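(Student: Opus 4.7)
The idea is to construct $\overline{M}$ concretely by replacing the rank-$1$ piece $DB_-$ with its natural double cover and then reattaching two copies of $DB_+$. First, I would apply Lemma~\ref{lem:involution} to the sphere bundle $S^0 \to L \to B_-$ to obtain a smooth free involution $\sigma : L \to L$ with $L/\sigma \cong B_-$. Since $DB_-$ is a rank-$1$ disk bundle over $B_-$ whose boundary is the double cover $L \to B_-$, there is a double cover $p_0 : L \times [-1, 1] \to DB_-$ whose deck transformation is $\tau(x, t) = (\sigma(x), -t)$; under $p_0$, the boundary component $L \times \{1\}$ projects diffeomorphically onto $\partial DB_- = L$ via the identity, while $L \times \{-1\}$ projects via $\sigma$.

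Next, I would form $\overline{M}$ by taking the cylinder $L \times [-1, 1]$ together with two disjoint copies $(DB_+)_0$ and $(DB_+)_1$ of $DB_+$, and gluing $L \times \{1\}$ to $\partial (DB_+)_1$ via $f$ and $L \times \{-1\}$ to $\partial (DB_+)_0$ via $f \circ \sigma$. Define $p : \overline{M} \to M$ to be $p_0$ on the cylinder and the canonical identification $(DB_+)_i \to DB_+$ on the other two pieces; the gluing formulas are arranged precisely so that both descriptions of $p$ agree on the overlap, making $p$ a well-defined $2$-to-$1$ local diffeomorphism, hence a double cover. To recognize $\overline{M}$ as a double disk bundle of the claimed form, I would absorb the cylinder $L \times [-1, 1]$ into a collar neighborhood of the boundary of $(DB_+)_1$; a direct calculation then yields $\overline{M} \cong (DB_+)_0 \cup_g (DB_+)_1$ with $g = f \sigma f^{-1}$.

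Finally, I would verify that the cover $\overline{M} \to M$ is non-trivial by showing that $\overline{M}$ is connected. Both $(DB_+)_0$ and $(DB_+)_1$ are connected by the standing assumption on singular leaves, and when $L$ is connected the cylinder $L \times [-1, 1]$ bridges them, so $\overline{M}$ is connected. The one case requiring care is when $L$ is disconnected, which by the general remarks in Section~2 forces $\ell_- = \ell_+ = 0$ and $L \cong S^0 \times B_-$; here $\widetilde{DB_-}$ has two components, and using that $\sigma$ swaps the two components of $L$, a direct accounting of the four boundary pieces shows that each cylindrical component bridges $(DB_+)_0$ and $(DB_+)_1$, so $\overline{M}$ remains connected. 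The main obstacle throughout is essentially bookkeeping: keeping straight how $\sigma$ and $f$ interact across the various identifications of $L$ so that the covering projection is consistent on the boundary gluings; the disconnected-$L$ case is the one place where care is needed to avoid accidentally producing the trivial cover $M \sqcup M$.
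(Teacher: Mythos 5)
Your proposal is correct and follows essentially the same route as the paper: both build $\overline{M}$ as $DB_+ \cup_{f} (L\times[-1,1]) \cup_{f\circ\sigma} DB_+$ using the involution $\sigma$ from Lemma~\ref{lem:involution}, absorb the cylinder into a collar to obtain a double disk bundle with two copies of $DB_+$, and exhibit a two-to-one projection to $M$ (you describe it as a covering map $p$, the paper equivalently as the free involution $\rho(y,t)=(\sigma(y),-t)$, $\rho(x,\pm1)=(x,\mp1)$ whose quotient is $M$). One small point in your favor: you explicitly verify connectedness of $\overline{M}$, including the disconnected-$L$ case, which is what makes the cover non-trivial; the paper leaves this implicit. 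The explicit formula $g=f\sigma f^{-1}$ is a nice bonus but not required by the statement.
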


\begin{proof}  Because $\ell_-=0$, Lemma \ref{lem:involution} gives a free involution $\sigma:L\rightarrow L$ with quotient $B_-$.  We now form $\overline{M}$ as the the union $$\overline{M} = (DB_+\times \{-1\}) \cup_{\sigma\circ f} L\times [-1,1] \cup_f (DB_+\times \{1\}),$$ where $DB_+\times \{-1\}$ is glued to $L\times \{-1\}$ and $DB_+\times \{1\}$ is glued to $L\times \{1\}$.  From \cite[Chapter VI, Section 5]{Ko}, the union $(DB_+\times \{-1\})\cup_{\sigma\circ f} L\times [-1,1]$ is diffeomorphic to $DB_+$, so $\overline{M}$ is diffeomorphic to a double disk bundle with both halves a copy of $DB_+$.

Thus, we need only show that $\overline{M}$ is a double cover of $M$.  To that end, we define a free involution $\rho$ on $\overline{M}$ whose quotient is $M$.  Given $(x,\pm 1)\in DB_+\times \{\pm 1\}$, we define $\rho(x,\pm 1) = (x,\mp 1)$.  In other words, $\rho$ interchanges the two copies of $DB_+$ on the``ends" of $\overline{M}$.  In addition, we define the action of $\rho$ on $L\times [-1,1]$ by mapping a point $(y,t)$ to $(\sigma(y),-t)$.  It is easy to verify that this is the required involution.




\end{proof}

If both $\ell_\pm = 0$, then applying Proposition \ref{prop:onecodim0} gives a double cover which again has both $\ell_\pm = 0$.  Hence, we can iterate this procedure.  This shows that a manifold can only admit a double disk bundle decomposition with both $\ell_\pm = 0$ if $\pi_1(M)$ is infinite.  In fact, while it will not be needed in the remainder of the paper, it turns out that a double cover of $M$ fibers over $S^1$.

\begin{proposition}\label{prop:bothcodim0}  Suppose $M$ is a connected manifold which admits a double disk bundle structure with both $\ell_- = \ell_+ = 0$ and regular leaf $L$.  Then $\pi_1(M)$ is infinite, and $M$ has a double cover $\overline{M}$ which fibers over $S^1$ with fiber $L$.

\end{proposition}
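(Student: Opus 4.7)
The plan is to construct $\overline{M}$ explicitly as a mapping torus of $L$ and extract both conclusions from it. Because $\ell_- = \ell_+ = 0$, each $DB_\pm$ is a rank-one disk (interval) bundle, and Lemma~\ref{lem:involution} supplies free involutions $\sigma_\pm : L \to L$ with $L/\sigma_\pm \cong B_\pm$. Identifying each interval bundle with the mapping cylinder of its boundary double cover, I can write $DB_\pm \cong L \times [0,1]/((x,0) \sim (\sigma_\pm(x),0))$, with boundary $L \times \{1\}$. Absorbing the gluing diffeomorphism $f : \partial DB_- \to \partial DB_+$ into one involution by setting $\tilde\sigma_+ := f^{-1} \circ \sigma_+ \circ f$, I can then present $M$ itself as $L \times [-1,1]$ modulo the two end identifications $(x,-1) \sim (\sigma_-(x),-1)$ and $(x,1) \sim (\tilde\sigma_+(x),1)$.

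To build $\overline{M}$, I take two copies of $L \times [-1,1]$ indexed by $i \in \{0,1\}$ and cross-identify the ends via $(x,-1,i) \sim (\sigma_-(x),-1,1-i)$ and $(x,1,i) \sim (\tilde\sigma_+(x),1,1-i)$. The swap $\rho(x,t,i) = (x,t,1-i)$ is a smooth involution; it is free because $\sigma_\pm$ are, and its quotient reproduces the single-cylinder presentation of $M$ above, so $\overline{M} \to M$ is a double cover. To recognize $\overline{M}$ as a mapping torus of $L$, I trace a loop starting in the interior of one cylinder, descending to $t=-1$ (crossing to the other cylinder via $\sigma_-$), rising to $t=1$ (crossing back via $\tilde\sigma_+$), and returning. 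The resulting monodromy on $L$ is $\tilde\sigma_+ \circ \sigma_-$, and $\overline{M}$ is diffeomorphic to the mapping torus of this diffeomorphism; in particular it fibers over $S^1$ with fiber $L$.

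Infiniteness of $\pi_1(M)$ then follows from the fibration: the projection $\overline{M} \to S^1$ yields a surjection $\pi_1(\overline{M}) \twoheadrightarrow \mathbb{Z}$, and the index-two overgroup $\pi_1(M)$ is equally infinite. Alternatively, one can deduce infiniteness directly by iterating Proposition~\ref{prop:onecodim0}: the covers it produces again have both $\ell_\pm = 0$, so the procedure yields subgroups of $\pi_1(M)$ of every index $2^n$. The only real subtlety is the disconnected-$L$ case forced by the background (when $L \cong S^0 \times B$ and both $DB_\pm$ are trivial products); the construction still goes through, but $\overline{M}$ may split into two components swapped by $\rho$, each a mapping torus of a diffeomorphism of $B$, which must be handled without affecting the fibration conclusion. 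The main obstacle I foresee is the careful bookkeeping of $f$ and the cross-identifications needed to verify smoothness of $\overline{M}$ and freeness of $\rho$; the identification with a mapping torus of $\tilde\sigma_+ \circ \sigma_-$ is then essentially forced.
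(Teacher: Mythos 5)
Your construction is correct and matches the paper's approach: both build $\overline{M}$ from two copies of $L\times[-1,1]$ glued end-to-end to form a mapping torus of $L$, and exhibit a free involution with quotient $M$. You conjugate $f$ away into $\tilde\sigma_+$ and use a $t$-preserving swap as the deck transformation, whereas the paper carries $f$ in the gluing data and uses an involution that also flips $t$, but these are cosmetic variations of the same argument (the resulting monodromies $\tilde\sigma_+\circ\sigma_-$ and $\sigma_-\circ\tilde\sigma_+$ are conjugate).
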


\begin{proof}  We have already proven the first statement, so we focus on the second.   By assumption, we may write $M = DB_+\cup_f DB_-$ for some diffeomorphism $f:L\rightarrow L$.

As both $\ell_\pm = 0$, Lemma \ref{lem:involution} gives a pair of free involutions $\sigma_\pm:L\rightarrow L$ with $L/\sigma_\pm$ diffeomorphic to $B_\pm$.  Both $\sigma_\pm$ extend to involutions on $L\times [-1,1]$ defined by $(y,t)\mapsto (\sigma_\pm(y),-t)$.  The quotient $(L\times [-1,1])/\sigma_\pm$ is clearly diffeomorphic to $DB_\pm$.

Now, take two copies of $L\times [-1,1]$, which we will refer to as the left copy and right copy.  We glue $(y,1)$ in the left copy to $(f(y),1)$ in the right copy, and we glue $(y,-1)$ in the left copy to  $(\sigma_+(f(\sigma_-(y))), -1)$ to form the manifold $\overline{M}$.

From \cite[Chapter VI, Section 5]{Ko}, if we only do the gluing of $(y,1)$ to $(f(y),1)$, the resulting manifold is diffeomorphic to $L\times [-1,1]$.  Thus, $\overline{M}$ has the structure of a mapping torus for some self diffeomorphism of $L$, so is a bundle over $S^1$ with fiber $L$.

It remains to see that $\overline{M}$ is a double cover of $M$.  To that end, we define a free involution $\rho$ on $\overline{M}$ with quotient $M$ as follows.  On the left copy of $L\times [-1,1]$, $\rho$ acts by $(y,t)\mapsto (\sigma_-(y),-t)$.  On the right copy, $\rho$ acts by $(y,t)\mapsto (\sigma_+(y), -t)$.  Once again, it is easy to verify this has the desired properties.




\end{proof}

\begin{remark}In Proposition \ref{prop:bothcodim0}, if $L$ is disconnected, then $M$ itself fibers over $S^1$.   On the other hand, if $L$ is connected, passing to a double cover is sometimes necessary to obtain the bundle structure.  For example, if $M = \mathbb{R}P^n\#\mathbb{R}P^n$ with $n\geq 3$, then $M$ has a double disk bundle structure with both $\ell_\pm = 0$.  Indeed, $\mathbb{R}P^n$ with a ball removed is a diffeomorphic to the total space of the disk bundle in the tautological bundle over $\mathbb{R}P^{n-1}$.  But $M$ does not fiber over $S^1$ because its fundamental group $\pi_1(M)\cong (\mathbb{Z}/2\mathbb{Z})\ast (\mathbb{Z}/2\mathbb{Z})$ has abelianization $(\mathbb{Z}/2\mathbb{Z})\oplus (\mathbb{Z}/2\mathbb{Z})$, so does not surject onto $\mathbb{Z}$.

\end{remark}

The next proposition describes how double disk bundles act with respect to covering maps.

\begin{proposition}\label{prop:lifttocover}  Suppose $M$ is a connected manifold which admits a double disk bundle structure with both $\ell_\pm \geq 1$.  If $\rho:M'\rightarrow M$ is any non-trivial covering (in the sense that $M'$ is connected), then $M'$ is a double disk bundle with regular leaf $L':=\rho^{-1}(L)$, singular leaves $B'_\pm:=\rho^{-1}(B_\pm)$, and with $\ell'_\pm = \ell_\pm$.  In addition, each of $L', B'_+$, and $B'_-$ are connected.

\end{proposition}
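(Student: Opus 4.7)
My plan is to pull the double disk bundle structure back along $\rho$ by setting $DB'_\pm := \rho^{-1}(DB_\pm)$, $B'_\pm := \rho^{-1}(B_\pm)$, and $L' := \rho^{-1}(L)$, and then verifying two things: (i) each of these preimages is connected, and (ii) $DB'_\pm \to B'_\pm$ is a disk bundle with fiber $D^{\ell_\pm+1}$. Once established, $M' = DB'_- \cup_{\rho|_{L'}} DB'_+$ is the required decomposition.

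The central step, and the one that uses $\ell_\pm \geq 1$ essentially, is (i). Because $\ell_\pm \geq 1$, the sphere fiber $S^{\ell_\pm}$ of the sphere bundle $L \to B_\pm$ is path-connected, so the homotopy long exact sequence forces $\pi_1(L) \twoheadrightarrow \pi_1(B_\pm)$ to be surjective; using the homotopy equivalence $DB_\pm \simeq B_\pm$, the inclusion $L \hookrightarrow DB_\pm$ is $\pi_1$-surjective as well. Since $L$ is connected (as guaranteed in Section 2 whenever some $\ell_\pm > 0$), Seifert--van Kampen presents
\[
\pi_1(M) \;\cong\; \pi_1(DB_-) *_{\pi_1(L)} \pi_1(DB_+)
\]
as a pushout in which \emph{both} amalgamating homomorphisms are surjective. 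A purely group-theoretic observation now kicks in: in any pushout of two surjective maps out of a common group, the images of the two factors in the pushout coincide with the image of the amalgamating group, so already each alone generates the pushout. Thus $i_{L,*}\pi_1(L) = i_{DB_\pm,*}\pi_1(DB_\pm) = i_{B_\pm,*}\pi_1(B_\pm) = \pi_1(M)$. The standard double-coset count for components of a restricted covering --- the number of components of $\rho^{-1}(A)$ equals $|\rho_*\pi_1(M') \backslash \pi_1(M) / i_*\pi_1(A)|$ --- then immediately gives that $L'$, $B'_\pm$, and $DB'_\pm$ are all connected.

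For (ii), I would exploit that each disk fiber $D^{\ell_\pm+1}$ is contractible, which makes $DB_\pm \to B_\pm$ a homotopy equivalence; consequently connected covers of $DB_\pm$ correspond bijectively to connected covers of $B_\pm$, each realized as the pullback of the disk bundle. Applying this dictionary to the connected cover $DB'_\pm \to DB_\pm$ identifies the corresponding base cover with $B'_\pm$ (the zero section of the pulled-back bundle is exactly $\rho^{-1}(B_\pm)$), and so $DB'_\pm \to B'_\pm$ is a $D^{\ell_\pm+1}$-bundle, giving $\ell'_\pm = \ell_\pm$. I expect the main obstacle to be the pushout/surjectivity argument in the previous paragraph: without $\ell_\pm \geq 1$, the conclusion genuinely fails (when some $\ell_\pm = 0$, Proposition~\ref{prop:onecodim0} already produces double covers in which the preimage of the opposite singular leaf is disconnected), so the hypothesis must enter decisively, and pinning down that this occurs via $\pi_1$-surjectivity of the sphere bundle is the conceptual crux. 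Everything else then reduces to routine fiber bundle and covering space bookkeeping.
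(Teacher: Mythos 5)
Your proof is correct, and its conceptual core is identical to the paper's: the decisive step in both is that $\ell_\pm\geq 1$ makes the sphere-bundle projections $\pi_1(L)\to\pi_1(B_\pm)$ surjective, whence Seifert--van Kampen forces $\pi_1(L)\to\pi_1(M)$ to be surjective, which controls the components of $\rho^{-1}(L)$. The packaging differs in two minor ways. For the connectedness of $L'$, the paper works with paths directly: it takes two points of a fiber, joins them in $M'$, projects to a loop in $M$, homotopes that loop rel endpoints into $L$ using the $\pi_1$-surjectivity, and lifts; you instead invoke the double-coset formula $|\rho_*\pi_1(M')\backslash\pi_1(M)/i_*\pi_1(A)|$ for the number of components of $\rho^{-1}(A)$, which reaches the same conclusion a bit more slickly and, as you note, simultaneously handles $B'_\pm$ and $DB'_\pm$ in one stroke (the paper handles $B'_\pm$ by observing they are continuous images of $L'$). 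For the disk-bundle structure on $DB'_\pm\to B'_\pm$ with $\ell'_\pm=\ell_\pm$, the paper simply cites \cite[Proposition 3.1d]{DeGGKe}, whereas you give a self-contained argument via the homotopy equivalence $DB_\pm\simeq B_\pm$ and the correspondence between covers of homotopy-equivalent spaces. One small point worth being careful about in your pushout observation: you should say explicitly that since $\pi_1(L)\twoheadrightarrow\pi_1(DB_\pm)$, the image of $\pi_1(DB_\pm)$ in the amalgam is contained in (hence equal to) the image of $\pi_1(L)$, and since the amalgam is generated by the images of the two factors, the image of $\pi_1(L)$ is everything; as written this is only gestured at, but it is correct.
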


\begin{proof}  Since a covering map is a submersion, everything except the connectedness of $L', B'_\pm$ is a direct consequence of \cite[Proposition 3.1d]{DeGGKe}.  Thus, we need only show the connectedness of $L'$ and $B'_\pm$.  As both $B'_\pm$ are the continuous image of the sphere bundle projections $L'\rightarrow B'_\pm$, it is sufficient to show that $L'$ is connected.

So, we now show that $L'$ is connected.  Because $\rho$ is a covering, so is $\rho|_{L'}:L'\rightarrow L$.  In addition, since at least one $\ell_\pm \geq 1$, $L$ must be connected. Thus, to show $L'$ is connected, it is sufficient to select $x\in L$, and show that any pair of points in $\rho^{-1}(x)$ can be connected by a path in $L'$.  Let $x_1,x_2\in \rho^{-1}(x)$.

Because $M'$ is connected, we may connect $x_1$ and $x_2$ by a path $\gamma':[0,1]\rightarrow M'$ in $M'$.  Then $\gamma:=\rho\circ \gamma'$ is a closed curve in $M$.

We claim that $\gamma$ is homotopic rel endpoints to a closed curve $\alpha$ lying entirely in $L$.  To see this, note that $\gamma$ represents an element of $\pi_1(M,x)$, so we need to show the map $\pi_1(L,x)\rightarrow \pi_1(M,x)$ induced by the inclusion $L\rightarrow M$ is surjective.

Seifert-van Kampen applied to the double disk bundle decomposition of $M$ shows that any curve in $M$ is, up to homotopy rel endpoints, a finite concatenation of curves in $DB_+$ and $DB_-$.  Because both $\ell_\pm \geq 1$, the long exact sequence in homotopy groups implies the maps $\pi_1(L)\rightarrow \pi_1(DB_\pm)\cong \pi_1(B_\pm)$ are surjective, so each curve in $DB_+$ or $DB_-$ is homotopic rel end points to one lying entirely in $L$.  In particular, $\gamma$ is homotopic rel end points to a curve $\alpha$ in $L$.

Now, since $\rho:L'\rightarrow L$ is a covering, it is, in particular, a fibration.  As $\gamma$ has a lift to $M'$, $\alpha$ must lift to a curve $\alpha':[0,1]\rightarrow M'$.  Since the homotopy from $\gamma$ to $\alpha$ fixed the end points and the fiber of $\rho$ is discrete, $\alpha'$ must have the same endpoints as $\gamma'$.  That is, $\alpha'$ is a curve connecting $x_1$ and $x_2$ with image in $L'$.  This completes the proof that $L'$ is is connected, and thus, of the proposition.



\end{proof}

In the special  that $M$ is aspherical, i.e., the universal cover of $M$ is contractible, we can completely characterize the possibilities for the fiber sphere dimensions $\ell_\pm$ for any double disk bundle structure on it.

\begin{proposition}\label{prop:flat}  Suppose $M$ is an aspherical manifold which admits a double disk bundle structure.  Then both $\ell_- = \ell_+ = 0$.  That is, both fiber spheres are zero-dimensional.

\end{proposition}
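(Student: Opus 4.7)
The plan is to argue by contradiction: assume $M$ admits a double disk bundle decomposition with $\ell_+\geq 1$, and deduce that the lifted decomposition on the universal cover forces the singular leaf $\widetilde{B}_-$ to have nonzero mod-$2$ homology in arbitrarily high degrees, contradicting finite dimensionality.

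First I would reduce to the case where both $\ell_\pm\geq 1$. If $\ell_-=0$, Proposition~\ref{prop:onecodim0} produces a non-trivial double cover $\overline{M}$ of $M$ whose double disk bundle structure has both fiber sphere dimensions equal to $\ell_+\geq 1$. Any connected cover of an aspherical manifold is itself aspherical (they share the same contractible universal cover), so after replacing $M$ by $\overline{M}$ I may assume $\ell_\pm\geq 1$. Then I pass to the universal cover $\widetilde M$, which is contractible. By Proposition~\ref{prop:lifttocover}, $\widetilde M=\widetilde{DB}_-\cup\widetilde{DB}_+$ is a double disk bundle with the same $\ell_\pm\geq 1$ and with \emph{connected} regular leaf $\widetilde L$ and connected singular leaves $\widetilde{B}_\pm$.

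The core step uses the long exact sequence of the pair $(\widetilde M,\widetilde{DB}_+)$, excision, and the Thom isomorphism with $\Z/2\Z$ coefficients (taken to sidestep orientability concerns). Contractibility of $\widetilde M$ gives $H_n(\widetilde M,\widetilde{DB}_+;\Z/2)\cong\widetilde H_{n-1}(\widetilde{B}_+;\Z/2)$, while excision together with the mod-$2$ Thom isomorphism for $\widetilde{DB}_-\to\widetilde{B}_-$ identifies this group with $H_{n-\ell_--1}(\widetilde{B}_-;\Z/2)$. Reindexing produces
\[
H_m(\widetilde{B}_-;\Z/2)\cong\widetilde H_{m+\ell_-}(\widetilde{B}_+;\Z/2)
\]
for every $m\geq 0$, and the symmetric argument (reversing the roles of $\pm$) yields
\[
H_m(\widetilde{B}_+;\Z/2)\cong\widetilde H_{m+\ell_+}(\widetilde{B}_-;\Z/2).
\]
Because $\ell_\pm\geq 1$, the reduced groups on the right coincide with the unreduced ones, so splicing the two isomorphisms gives $H_m(\widetilde{B}_-;\Z/2)\cong H_{m+\ell_-+\ell_+}(\widetilde{B}_-;\Z/2)$ for every $m\geq 0$.

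Finally, since $\widetilde{B}_-$ is nonempty and connected, $H_0(\widetilde{B}_-;\Z/2)=\Z/2$. Iterating the periodicity isomorphism yields $H_{k(\ell_-+\ell_+)}(\widetilde{B}_-;\Z/2)\neq 0$ for every $k\geq 0$; since $\ell_-+\ell_+\geq 2$, this is nonvanishing homology in arbitrarily high degrees, contradicting the fact that $\widetilde{B}_-$ is a manifold of finite dimension $\dim M-\ell_--1$. The main obstacle I anticipate is the bookkeeping needed to identify the composite of excision and the Thom map with the boundary map in the pair sequence; working mod $2$ renders all bundles automatically ``oriented'' and should make this step routine.
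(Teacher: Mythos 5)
Your proof is correct, and it takes a genuinely different route from the paper's. The paper applies Mayer--Vietoris in cohomology to the decomposition of the contractible universal cover to get isomorphisms $H^k(B'_-)\oplus H^k(B'_+)\cong H^k(L')$, then feeds the resulting injectivity into the Gysin sequence to show both mod-$2$ Euler classes vanish, producing (additively) a tensor-product description of $H^*(L';\Z/2)$; an inductive argument then shows $H^{t(\ell_-+\ell_+)}(L';\Z/2)\neq 0$ for all $t$, contradicting the finite dimensionality of $L'$. You instead run the long exact sequence of the pair $(\widetilde M,\widetilde{DB}_+)$, use excision to move to $(\widetilde{DB}_-,\partial\widetilde{DB}_-)$, and then apply the mod-$2$ Thom isomorphism for the disk bundle $\widetilde{DB}_-\to\widetilde{B}_-$. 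Splicing the two symmetric identifications gives the periodicity $H_m(\widetilde{B}_-;\Z/2)\cong H_{m+\ell_-+\ell_+}(\widetilde{B}_-;\Z/2)$ directly, and the contradiction is drawn from the unbounded non-vanishing homology of the (possibly non-compact but finite-dimensional) manifold $\widetilde{B}_-$ rather than of the regular leaf. The reduction to $\ell_\pm\geq 1$ via Proposition~\ref{prop:onecodim0} and the lift via Proposition~\ref{prop:lifttocover} are the same as in the paper. Your route is somewhat leaner: the Thom isomorphism packages what the paper extracts from the Gysin sequence (triviality of the mod-$2$ Euler class plus the resulting tensor-product structure) into one step, and the periodicity emerges without a separate induction. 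The paper's route, on the other hand, makes the cohomology ring structure of $L'$ explicit, which can be useful information beyond the immediate contradiction. Both are standard tools, and the bookkeeping concern you flag at the end (identifying the composite of the boundary map, excision, and the Thom map) is indeed routine with $\Z/2$ coefficients.
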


\begin{proof}We assume for a contradiction that $M$ has a double disk bundle decomposition with say, $\ell_- > 0$.  This implies that the regular leaf $L$ is connected.  If $\ell _+ = 0$, then Proposition \ref{prop:onecodim0} implies that $M$ has a double cover admitting a double disk bundle structure with both $\ell_\pm > 0$.  Noting that the double cover of an aspherical manifold is aspherical, we may therefore assume that both $\ell_\pm > 0$.

In this case, we consider the universal cover $\rho:M'\rightarrow M$.  From Proposition \ref{prop:lifttocover}, we obtain a double disk bundle structure on $M'$ with regular leaf $L'$ and singular leaves $B'_\pm$ connected.  We will conclude the proof by showing that $M'$ has no such double disk bundle structure.  Specifically, we will show that $H^{t(\ell_+ + \ell_-)}(L';\mathbb{Z}/2\mathbb{Z})$ is non-trivial for all $t\geq 0$, contradicting the fact that $L'$ is a finite dimensional manifold.  Set $R = \mathbb{Z}/2\mathbb{Z}$ for legibility.

Because $M'$ is contractible, the Mayer-Vietoris sequence for the double disk bundle decomposition of $M'$ yields isomorphisms $\psi_k:H^k(B'_-;R)\oplus H^k(B'_+;R)\rightarrow H^k(L';R)$ for each $k\geq 1$ (and that $\psi_0$ is surjective).  Recalling that $\psi_k$ is nothing but the difference in the maps induced by the sphere bundle projections $L\rightarrow B_\pm$, it follows that each map $H^k(B'_\pm;R)\rightarrow H^k(L';R)$ must injective.  Since both $B'_\pm$ are connected, we have Gysin sequences associated to $L\rightarrow B_\pm$; injectivity of $H^\ast(B'_\pm;R)\rightarrow H^\ast(L';R)$ then implies via the Gysin sequence that the $R$-Euler class of both bundles $L'\rightarrow B'_\pm$ is trivial.  We thus have group isomorphisms $$H^\ast(L';R)\cong H^\ast(B'_+;R)\otimes H^\ast(S^{\ell_+};R)\cong H^\ast(B'_-;R)\otimes H^\ast(S^{\ell_-};R),$$ where the inclusions $H^\ast(B'_\pm;R)\rightarrow H^\ast(B'_\pm;R)\otimes H^\ast(S^{\ell_{\pm}};R)$ are the obvious ones.

We will now prove that $H^{t(\ell_- + \ell_+)}(L';R)\neq 0$ for all $t\geq 0$ by induction.   The base case is clear, as it is simply the assertion that $H^0(L';R)\neq 0$.

Now, assume that $H^{t(\ell_- + \ell_+)}(L';R)$ is non-zero for some $t\geq 0$.  Since $\psi_k$ for $k:=t(\ell_+ + \ell_-)$ is surjective, there must therefore be a non-zero element $x$ in at least one of $H^k(B'_\pm;R)$.  We assume without loss of generality that $x\in H^k(B'_+;R)$.  If $y_\pm\in H^{\ell_\pm}(S^{\ell_\pm};R)\cong R$ is the non-zero element, then the element $x\otimes y_+ \in H^{k+\ell_+}(L';R)$ is non-zero, and not in the image of $H^{k+\ell_+}(B'_+;R)$.  Since $\psi_{k+\ell_+}$ is surjective, it now follows that $H^{k+\ell_+}(B'_-;R)\neq 0$.   Suppose $z\in H^{k+\ell_+}(B'_-;R)$ is such a non-zero element.  Then the element $z\otimes y_-\in H^{(t+1)(\ell_- + \ell_+)}(L';R)$ is non-zero, completing the induction.
\end{proof}

We will also need a proposition regarding orientability.

\begin{proposition}\label{prop:orientable}  Suppose $M$ is a double disk bundle and that $M$ is orientable.  Then so is the regular leaf $L$.

\end{proposition}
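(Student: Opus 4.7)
The plan is to exploit the fact that the regular leaf $L$ sits inside $M$ as a separating (hence two-sided) hypersurface, and two-sided hypersurfaces of orientable manifolds are orientable.

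First I would observe that $L$ separates $M$ into the two closed codimension-zero submanifolds $DB_-$ and $DB_+$, with $L = \partial DB_- = \partial DB_+$. In particular $L$ is two-sided in $M$: by the tubular (collar) neighborhood theorem applied to the boundary of either $DB_\pm$, there is an open neighborhood of $L$ in $M$ diffeomorphic to $L \times (-\epsilon, \epsilon)$, where the $(-\epsilon, 0]$ side lies in $DB_-$ and the $[0,\epsilon)$ side lies in $DB_+$.

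Next I would use orientability of $M$. Since $L \times (-\epsilon, \epsilon)$ is an open subset of the orientable manifold $M$, it inherits an orientation and is therefore itself orientable. A product $X \times Y$ of manifolds is orientable if and only if each factor is orientable, so $L$ must be orientable, completing the argument.

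As an alternative phrasing (essentially equivalent), one could argue that each $DB_\pm$ is an orientable manifold with boundary, being a codimension-zero submanifold with boundary of the orientable manifold $M$; and the boundary of an orientable manifold with boundary is itself orientable, so $L = \partial DB_\pm$ is orientable. There is no real obstacle here—the statement is essentially a packaging of the standard fact that a two-sided hypersurface of an orientable manifold is orientable—so the main task is just to make sure the collar/tubular neighborhood is invoked cleanly and that the use of connectedness of $L$ (or the case $L \cong S^0 \times B_-$ of a disconnected regular leaf, in which case the claim is immediate since each component is a boundary component of an orientable manifold) is handled.
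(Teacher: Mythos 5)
Your proof is correct, and it establishes the same essential fact as the paper's proof (that $L$ has trivial normal bundle in $M$), but it runs the argument at a more elementary level. The paper notes that $L=\partial DB_\pm$ has trivial normal bundle, writes $TM|_L = TL\oplus 1$, and applies the Whitney sum formula to conclude $w_1(TL)=0$. You instead use the collar neighborhood to realize the trivial normal bundle geometrically as a two-sided tubular neighborhood $L\times(-\epsilon,\epsilon)\subset M$, then invoke that an open subset of an orientable manifold is orientable and that a product is orientable if and only if its factors are; your alternative phrasing (the boundary of an orientable manifold-with-boundary is orientable, applied to $DB_\pm\subset M$) is equally clean. Both routes are valid; the paper's is shorter once characteristic classes are available, while yours avoids them entirely. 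Your remark about the disconnected case $L\cong S^0\times B_-$ is a nice sanity check but isn't logically necessary, since neither argument assumes $L$ connected.
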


\begin{proof}
Because $L$ is the boundary of both disk bundles, $L$ must have trivial normal bundle.  Then $TM|_L = TL \oplus 1$ with $1$ denoting a trivial rank $1$ bundle.  Computing the first Stiefel-Whitney class using the Whitney sum formula, we find $$0 = w_1(TM|_L) = w_1(TL) + w_1(1) = w_1(TL).$$  Thus $w_1(TL) = 0$, so $L$ is orientable.

\end{proof}

\section{$3$-dimensional examples}

The goal of this section is to prove the following theorem.

\begin{theorem}\label{thm:3mfld}Suppose $M^3$ is a closed manifold admitting a metric of positive sectional curvature.  Then $M$ is a double disk bundle if and only if $M$ is $S^3$, a lens space $L(p,q)$, or a prism manifold.

\end{theorem}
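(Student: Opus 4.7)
The plan is to analyze all possible double disk bundle decompositions of a positively curved closed $3$-manifold $M$ by enumerating the fiber sphere dimensions $(\ell_-, \ell_+)$. Since $\dim M = 3$ forces $\dim B_\pm = 2 - \ell_\pm$, we must have $\ell_\pm \in \{0, 1, 2\}$. By Hamilton's theorem every closed $3$-manifold of positive sectional curvature is a spherical space form $S^3/\Gamma$, so $M$ is orientable and $\pi_1(M)$ is finite. Proposition \ref{prop:bothcodim0} then rules out $(\ell_-, \ell_+) = (0, 0)$, and Proposition \ref{prop:orientable} forces the regular leaf $L$ to be orientable throughout.

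The remaining cases I would dispatch as follows. Case $(2, 2)$ has both halves equal to $D^3$, so $L = S^2$ and $M = S^3$. A mixed case with one of $\ell_\pm$ equal to $2$ forces $L$ to be simultaneously $S^2$ and either an $S^1$-bundle over $S^1$ (impossible) or a $2$:$1$ cover of a closed surface (possible only when that surface is $\mathbb{R}P^2$, yielding $M = L(2, 1)$). In case $(1, 1)$ both halves are $2$-disk bundles over $S^1$, and orientability of $L$ excludes the solid Klein bottle, leaving a genus-one Heegaard splitting by solid tori; applying Seifert-van Kampen then presents $\pi_1(M)$ as $\mathbb{Z} *_{\mathbb{Z}^2} \mathbb{Z}$ with the amalgamation determined by the gluing, which identifies $M$ as a lens space (or $S^1 \times S^2$, excluded by positive curvature). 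In case $\{\ell_-, \ell_+\} = \{1, 0\}$ orientability forces $L = T^2$, one half is a solid torus, and the other is a line disk bundle over a closed surface $B_+$ whose orientation double cover is $T^2$, so $B_+ \in \{T^2, K\}$. A Stiefel-Whitney class calculation shows $B_+ = T^2$ produces a non-orientable total space and hence non-orientable $M$, so $B_+ = K$ and the other half must be the twisted $I$-bundle over the Klein bottle.

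The last case is the heart of the argument. Here $M$ is obtained by gluing a solid torus to the twisted $I$-bundle over $K$ along $T^2$. Seifert-van Kampen presents $\pi_1(M)$ as the amalgam of $\mathbb{Z} = \pi_1(\text{solid torus})$ and $\pi_1(K) = \langle a, b \mid abab^{-1}\rangle$ over $\mathbb{Z}^2$, with the amalgamation depending on the gluing slope. I would then verify that requiring $\pi_1(M)$ to be finite cuts the possibilities down to exactly the binary dihedral (generalized quaternion) fundamental groups, so $M$ must be a prism manifold. For the converse I would exhibit the standard decompositions: $S^3 = D^3 \cup D^3$, a lens space from its genus-one Heegaard splitting, and a prism manifold from its classical description as a solid torus glued to the twisted $I$-bundle over $K$. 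I expect the main obstacle to lie in this final case, namely making the Seifert-van Kampen computation sharp enough to identify exactly which gluing slopes produce finite $\pi_1(M)$ and to confirm that each resulting $M$ is actually diffeomorphic, not just homotopy-equivalent, to a prism manifold.
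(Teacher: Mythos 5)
Your proposal follows essentially the same route as the paper: restrict $L$ to $S^2$ or $T^2$ via orientability and Proposition~\ref{prop:orientable}, rule out $\ell_- = \ell_+ = 0$ via Proposition~\ref{prop:bothcodim0}, classify the relevant disk bundles, and run Seifert--van Kampen (or the genus-one Heegaard classification) on the gluings. Two small comments. First, your exclusion of $B_- = T^2$ in the $\{\ell_-,\ell_+\}=\{1,0\}$ case via a $w_1$ computation is a clean shortcut; the paper instead notes that $B_- = T^2$ would make $\pi_1(M)$ a quotient of $\mathbb{Z}^2$, hence abelian, and folds it into the lens-space case. Second, and this is the one thing to be careful about: the finite groups that come out of the amalgam $\pi_1(K) \ast_{\mathbb{Z}^2} \mathbb{Z}$ are \emph{not} precisely the binary dihedral (generalized quaternion) groups. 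The computation yields presentations $\langle d,e \mid ded^{-1}e = 1,\ d^{2\beta} = e^{\alpha}\rangle$ with $\gcd(\alpha,\beta)=1$, which is the full class of prism manifold fundamental groups; these include the generalized quaternion groups as the special case $\beta=1$ but also, for other $\beta$, central extensions by larger cyclic factors (and even some cyclic groups when the group degenerates to abelian). If you classify assuming only quaternion groups arise, you would miss valid double disk bundle decompositions. Otherwise the structure of your argument matches the paper's and should go through once that identification and the appeal to the diffeomorphism classification of spherical space forms (\cite[Theorem 2.2]{AsFrWi}) are made precise.
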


By definition, a lens space $L(p,q)$ (where $\gcd(p,q)$ is necessarily $1$)  is the quotient of $S^3$ by a free isometric action by the cyclic group $\mathbb{Z}/p\mathbb{Z}\subseteq S^1\subseteq \mathbb{C}$ acting on $S^3\subseteq \mathbb{C}^2$  via $\mu\ast(z_1,z_2)  = (\mu z_1, \mu^q z_2).$  Also, by definition, a prism manifold is an isometric quotient of a round $S^3$ with fundamental group isomorphic to $\langle a,b| aba^{-1}b = 1, a^{2\beta} = b^\alpha\}$ where $\gcd(\alpha,\beta) = 1$.  Prism manifolds include the homogeneous spaces $S^3/D_{4n}^\ast$ where $D_{4n}^\ast$ is the order $4n$ group generated by $e^{2\pi i/n}$ and $j$ in the group $Sp(1)$ of unit length quaternions.

From, e.g.,\cite[Table 1]{Mc}, the homogeneous $3$-manifolds which are covered by $S^3$ consists of precisely the lens space $L(p,1)$, the prism manifolds $S^3/D_{4n}^\ast$, and the spaces $S^3/T^\ast$, $S^3/O^\ast$, or $S^3/I^\ast$ where $T^\ast$, $O^\ast$, and $I^\ast$ are the binary tetrahedral, octohedral, and icosahedral groups respectively.  In addition, from e.g., \cite[Section 7.5]{Wo}, the product of any of these fundamental groups with a cyclic group of relatively prime order is again the fundamental group of a positively curved $3$-manifold.  Thus, Theorem \ref{thm:3mfld}has the following corollary.

\begin{corollary}\label{cor:inf}
There are infinitely many positively curved $3$-manifolds which do not admit a double disk bundle structure.  These examples include precisely three homogeneous examples: $S^3/T^\ast$, $S^3/O^\ast$, and $S^3/I^\ast$, were $T^\ast, O^\ast,$ and $I^\ast$ are the binary tetrahedral, octahedral, and icosahedral groups respectively.
\end{corollary}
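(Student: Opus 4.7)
The plan is to combine Theorem \ref{thm:3mfld} with two classification results: \cite[Section 7.5]{Wo}, which describes the fundamental groups of positively curved closed $3$-manifolds, and \cite[Table 1]{Mc}, which enumerates the homogeneous ones. By Theorem \ref{thm:3mfld}, a positively curved $M^3$ fails to be a double disk bundle precisely when $\pi_1(M)$ is neither trivial, cyclic, nor of prism type.

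For the infinite family I would exhibit a concrete sequence: the quotients $S^3/(I^{\ast} \times \mathbb{Z}/n\mathbb{Z})$ with $\gcd(n,120) = 1$. By the cited result of \cite{Wo}, each such product acts freely and isometrically on the round $S^3$, so the quotient carries a metric of positive sectional curvature, and these are pairwise non-diffeomorphic since their fundamental groups have distinct orders $120n$. The key observation is that each such $\pi_1$ is non-solvable: $I^{\ast}/\{\pm 1\} \cong A_5$ is simple non-abelian, so $I^{\ast}$ is non-solvable, hence so is $I^{\ast} \times \mathbb{Z}/n\mathbb{Z}$. Prism groups, by contrast, are metacyclic extensions of cyclic by cyclic and thus solvable; cyclic groups are solvable trivially. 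By Theorem \ref{thm:3mfld} none of these quotients is a double disk bundle.

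For the \emph{precisely three} homogeneous assertion, I would invoke \cite[Table 1]{Mc}: the closed homogeneous Riemannian $3$-manifolds covered by $S^3$ are exactly the lens spaces $L(p,1)$, the prism manifolds $S^3/D_{4n}^{\ast}$, and the three spaces $S^3/T^{\ast}$, $S^3/O^{\ast}$, $S^3/I^{\ast}$. The first two families are lens spaces and prism manifolds respectively, hence are double disk bundles by Theorem \ref{thm:3mfld}, while the icosahedral case is covered by the non-solvability argument above. To finish, it remains to verify that $T^{\ast}$ and $O^{\ast}$ are neither cyclic (immediate, as both are non-abelian) nor prism groups. For each, I would enumerate the finitely many pairs $(\alpha,\beta)$ with $\gcd(\alpha,\beta) = 1$ and $4\alpha\beta \in \{24,48\}$, and show that the group admits no normal cyclic subgroup of order $2\alpha$ with cyclic quotient of order $2\beta$. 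For instance, since $T^{\ast}/\{\pm 1\} \cong A_4$ has no subgroup of index $2$, the group $T^{\ast}$ has no subgroup of order $12$ at all, ruling out $(\alpha,\beta)=(6,1)$; analogous Sylow-theoretic observations dispose of the remaining cases, and the same strategy works for $O^{\ast}$.

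The only delicate step is this last case analysis for $T^{\ast}$ and $O^{\ast}$; it is routine but requires attention to each admissible parameter pair. Alternatively, the work can be absorbed by quoting the classification of spherical space-form groups in \cite[Section 7.5]{Wo}, where $T^{\ast}$, $O^{\ast}$, and $I^{\ast}$ (together with their products with coprime cyclic factors) are presented as types explicitly distinct from the cyclic and prism families.
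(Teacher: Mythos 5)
Your proposal is correct and follows essentially the same route as the paper: invoke Theorem~\ref{thm:3mfld}, cite \cite[Table 1]{Mc} for the homogeneous quotients of $S^3$, and cite \cite[Section 7.5]{Wo} for the existence of spherical space forms with fundamental group a coprime product of a binary polyhedral group and a cyclic group. The only difference is that you supply explicit group-theoretic reasoning (non-solvability for $I^{\ast}\times\mathbb{Z}/n\mathbb{Z}$, the non-split extension argument for $T^{\ast}$ and $O^{\ast}$) to distinguish these fundamental groups from the cyclic and prism families, whereas the paper simply absorbs this into the citation of Wolf's classification.
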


\begin{remark}\label{rem:nonneg}By using work of others, it is easy to extend Theorem \ref{thm:3mfld} to non-negatively curved three manifolds.  Hamilton \cite[Main Theorem]{Ha1}\cite[Theorem 1.2]{Ha2} showed a closed $3$-manifold $M$ admitting a metric of non-negative sectional curvature is covered by $S^3$, $S^2\times S^1$, or $T^3$.  If $M$ is covered by $S^2\times S^1$, then $M$ is diffeomorphic to $S^2\times S^1$, $\mathbb{R}P^2\times S^1$, $\mathbb{R}P^3\# \mathbb{R}P^3$, or to the unique non-trivial $S^2$ bundle over $S^1$ \cite{To}.  Clearly for each of these possibilities, $M$ is a double disk bundle.  If $M$ is covered by $T^3$, then from \cite[pg. 448]{Sc}, $M$ is a double disk bundle.

\end{remark}

We now work towards proving Theorem \ref{thm:3mfld}.  For the remainder of this section, $M$ denotes a $3$-manifold of positive sectional curvature.  From \cite[Main Theorem]{Ha1}, $M$ is finitely covered by $S^3$, so has finite fundamental group.   A simple application of the Lefshetz fixed point theorem implies that $M$ must be orientable. From Proposition \ref{prop:bothcodim0}, at least one of $\ell_\pm > 0$, which, in particular, implies that $L$ is connected.

\begin{proposition}  Suppose $M$ is a closed orientable $3$-manifold which admits a double disk bundle decomposition with at least one fiber sphere of positive dimension.  The regular leaf $L$ must be diffeomorphic to either $S^2$ or $T^2$.

\end{proposition}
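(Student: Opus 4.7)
The plan is to leverage the low dimension to enumerate the possibilities for the singular leaves directly, and then use orientability to eliminate the non-orientable case. Since $L = \partial DB_\pm$ is a closed $2$-manifold, and by Proposition \ref{prop:orientable} it is orientable (as $M$ is), $L$ is necessarily some closed orientable surface $\Sigma_g$. The goal is to show $g \in \{0,1\}$.

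Without loss of generality assume $\ell_+ > 0$. Since $\dim B_+ = \dim M - (\ell_+ + 1) = 2 - \ell_+ \geq 0$, the only possibilities are $\ell_+ \in \{1,2\}$. I would split into these two cases. If $\ell_+ = 2$, then $B_+$ is a point, so $DB_+$ is a $3$-disk and $L = \partial DB_+ \cong S^2$, giving $g = 0$. If $\ell_+ = 1$, then $B_+$ is a closed connected $1$-manifold, hence diffeomorphic to $S^1$, and the sphere bundle $S^1 \to L \to S^1$ expresses $L$ as a circle bundle over a circle. Such bundles are classified up to diffeomorphism as either $T^2$ or the Klein bottle, and since $L$ must be orientable, we conclude $L \cong T^2$.

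The argument is essentially a dimension count together with the orientability constraint from Proposition \ref{prop:orientable}; there is no substantive obstacle, as the classification of circle bundles over $S^1$ is elementary. The only point that needs care is to record why we may assume $\ell_+ > 0$ in the first place, which is simply the hypothesis that at least one fiber sphere has positive dimension combined with the freedom to relabel.
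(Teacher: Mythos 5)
Your proof is correct and follows essentially the same approach as the paper: you reduce by relabeling to $\ell_+ > 0$, split into the cases $\ell_+ = 2$ and $\ell_+ = 1$ by a dimension count, and invoke Proposition \ref{prop:orientable} to rule out the Klein bottle. The only small difference is in the $\ell_+ = 1$ case, where you classify circle bundles over $S^1$ directly, while the paper instead notes $\chi(L) = \chi(S^1)\chi(B_+) = 0$; these are equivalent for this low-dimensional case.
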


\begin{proof}  Assume without loss of generality that $\ell_+ > 0$.  This implies that $L$ is connected.  Since $L$ is $2$-dimensional and an $S^{\ell_+}$-bundle over $B_+$, we must have $\ell_+\in \{1,2\}$.  If $\ell_+ = 2$, the fiber inclusion map $S^2\rightarrow L$ is an embedding between closed manifolds of the same dimension, hence a diffeomorphism.  If $\ell_+ = 1$, then the Euler characteristic $\chi(L) = \chi(S^1)\chi(B_+) = 0$, so $L$ must be $T^2$ or a Klein bottle.  But $L$ must orientable from Proposition \ref{prop:orientable}.

\end{proof}

We will proceed by breaking into cases depending on whether $L = S^2$ or $L = T^2$.  We will classify all disk bundles whose boundary is diffeomorphic to $L$, and then classify ways of gluing the corresponding disk bundles.  Using a collar neighborhood, it easy to see that if two gluing maps are isotopic, then the corresponding double disk bundles are diffeomorphic.  The following lemma provides another circumstance where the double disk bundles are diffeomorphic.

\begin{lemma}\label{lem:glue}Suppose $X$ and $Y$ are manifolds with boundary and $f:\partial X\rightarrow \partial Y$ is a diffeomorphism.  Assume in addition that $G:X\rightarrow X$ is a diffeomorphism with $g:= G|_{\partial X}:\partial X\rightarrow \partial X$.  Then the manifolds $X\cup_f Y$ and $X\cup_{f\circ g} Y$ are diffeomorphic.

\end{lemma}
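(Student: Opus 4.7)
The plan is to write down an explicit diffeomorphism $\Phi\colon X\cup_f Y \to X\cup_{f\circ g} Y$ that uses $G$ to ``absorb'' the extra gluing twist $g$ into the $X$-side. The natural candidate is
\[
\Phi(z)=\begin{cases} G^{-1}(z) & \text{if } z\in X,\\ z & \text{if } z\in Y. \end{cases}
\]
Set-theoretic well-definedness is immediate: a point $p\in\partial X$ is identified in the source with $f(p)\in\partial Y$, and under $\Phi$ its two representatives map to $g^{-1}(p)\in\partial X$ and $f(p)\in\partial Y$, which are identified in the target because the target gluing sends $g^{-1}(p)\mapsto (f\circ g)(g^{-1}(p))=f(p)$. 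The obvious set-theoretic inverse uses $G$ on $X$ and the identity on $Y$.

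The only real issue is smoothness across the common boundary. For this, I would first replace $G$ by an isotopic diffeomorphism having the same restriction $g$ to $\partial X$ but also preserving some fixed collar $c\colon \partial X\times[0,1)\hookrightarrow X$ by the formula $(p,t)\mapsto(g(p),t)$. This reduction is a standard consequence of the uniqueness of collar neighborhoods: the two collars $c$ and $G\circ c\circ(g^{-1}\times\mathrm{id})$ agree on $\partial X$, so they differ by a diffeomorphism $H\colon X\to X$ isotopic to $\mathrm{id}$ rel $\partial X$, and the composition $H^{-1}\circ G$ has the required collar form while still restricting to $g$ on the boundary. Since the target manifold $X\cup_{f\circ g}Y$ depends only on this boundary restriction, we may use the modified diffeomorphism in place of $G$ when defining $\Phi$.

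Once $G$ has this form, the smooth structure on both glued manifolds near the common boundary is captured by double-collar charts $(q,t)\in\partial X\times(-\varepsilon,\varepsilon)$, with $t\geq 0$ parameterizing the point $c(q,t)\in X$ and $t\leq 0$ parameterizing $c_Y(f(q),-t)\in Y$ in the source, and $c_Y((f\circ g)(q),-t)\in Y$ in the target. In these coordinates $\Phi$ reduces to $(q,t)\mapsto(g^{-1}(q),t)$ on both sides of the boundary and is therefore manifestly smooth there; away from the boundary smoothness is immediate from the definition. The same analysis applied to the inverse completes the proof.

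The main obstacle is the collar-adjustment step; everything else is bookkeeping. An alternative would be to insert a product slab $\partial X\times[0,1]$ between $X$ and $Y$ and perform the interpolation there, but I find the direct collar adjustment cleaner.
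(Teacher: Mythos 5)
Your proposal uses essentially the same map as the paper: a diffeomorphism that acts by $G$ (or its inverse) on the $X$-piece and by the identity on the $Y$-piece, together with the same well-definedness check on the boundary. The only difference is that the paper declares smoothness of this map to be obvious, whereas you correctly observe that smoothness across the glued boundary requires a collar adjustment and carry it out explicitly.
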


\begin{proof}We define a diffeomorphism $\phi:X\cup_{f\circ g} Y\rightarrow X\cup_{f} Y$ by mapping $x\in X$ to $\phi(x) = G(x)$ and mapping $y\in Y$ to $\phi(y)=y$.  It is obvious that $\phi$ is a diffeomorphism, if it is well defined.

We now check that it is well-defined.  If we first identify $x\in \partial X$ with $f(g(x))$ and then apply $\phi$, we obtain the point $f(g(x))$.  On the other hand, if we first apply $\phi$ and then identify with $\partial Y$, we get $\phi(x) = G(x) = g(x)\sim f(g(x))$.

\end{proof}

\begin{proposition}  Suppose $M$ is a double disk bundle with regular leaf $L = S^2$.  Then, $M$ is diffeomorphic to $S^3$, $\mathbb{R}P^3$, or $\mathbb{R}P^3\# \mathbb{R}P^3$.

\end{proposition}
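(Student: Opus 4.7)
The plan is to first enumerate all possible disk bundles over a closed connected base whose total space has dimension three and whose boundary is diffeomorphic to $S^2$, then to enumerate the resulting double disk bundles by exploiting the smallness of $\pi_0\mathrm{Diff}(S^2)$ together with Lemma~\ref{lem:glue}. Since $\dim DB_\pm = 3$ and $\partial DB_\pm = L = S^2$, the base $B_\pm$ has dimension $2-\ell_\pm$, leaving three cases. If $\ell_\pm = 2$, then $B_\pm$ is a point and $DB_\pm = D^3$. If $\ell_\pm = 1$, then $B_\pm = S^1$, but any $S^1$-bundle over $S^1$ is $T^2$ or the Klein bottle, never $S^2$, so this case is vacuous. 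If $\ell_\pm = 0$, then $B_\pm$ is a closed surface admitting a connected double cover by $S^2$, which forces $B_\pm = \mathbb{R}P^2$, and $DB_\pm$ is then the unique nontrivial line bundle over $\mathbb{R}P^2$, whose total space $E$ is diffeomorphic to $\mathbb{R}P^3$ with the interior of a ball removed.

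The pair $(DB_-, DB_+)$ therefore falls into one of three cases: (a) both are $D^3$, (b) one is $D^3$ and the other is $E$, or (c) both are $E$. In each case, the gluing $f \colon S^2 \to S^2$ is, up to isotopy, either the identity or orientation-reversing, by Smale's theorem that $\pi_0 \mathrm{Diff}(S^2) = \mathbb{Z}/2$; isotopic gluings produce diffeomorphic double disk bundles via collar-neighborhood arguments, so it suffices to consider these two representatives.

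Next I would apply Lemma~\ref{lem:glue} to collapse the two isotopy classes to one in every case. In cases (a) and (b), the antipodal map on $D^3$ is a self-diffeomorphism restricting to orientation-reversal on $\partial D^3 = S^2$, so Lemma~\ref{lem:glue} shows both gluings yield the same manifold: $D^3 \cup_{\mathrm{id}} D^3 = S^3$ in case (a), and $D^3 \cup_{\mathrm{id}} E = \mathbb{R}P^3$ in case (b), where the second identification uses the standard decomposition of $\mathbb{R}P^3$ as a ball glued to a tubular neighborhood of $\mathbb{R}P^2$. For case (c), I would produce an orientation-reversing self-diffeomorphism of $\mathbb{R}P^3$ that acts as the antipodal map on a standard ball (for instance, the descent of the complex conjugation $(z_1,z_2)\mapsto(\bar z_1,\bar z_2)$ on $S^3 \subseteq \mathbb{C}^2$, post-composed with a suitable isotopy to standardize its action on an invariant ball). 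Restricting to the complement of that ball gives a self-diffeomorphism of $E$ that reverses orientation on $\partial E = S^2$, so Lemma~\ref{lem:glue} again absorbs the choice, and both gluings yield the same manifold, namely the standard connected sum $\mathbb{R}P^3 \# \mathbb{R}P^3$.

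The main obstacle is the verification in case (c) that $E$ admits a self-diffeomorphism whose boundary restriction is orientation-reversing on $S^2$; the other two cases are immediate from the self-symmetries of $D^3$. Everything else is routine classification of bundles and a direct application of Lemma~\ref{lem:glue}.
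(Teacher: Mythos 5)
Your proof takes essentially the same route as the paper: enumerate the disk bundles whose boundary is $S^2$, use Smale's theorem that $\pi_0\operatorname{Diff}(S^2)=\Z/2\Z$, and absorb the gluing choice via Lemma~\ref{lem:glue}. The paper handles case (c) a bit more briefly, simply invoking the standard fact that $\RP^3\#(-\RP^3)\cong\RP^3\#\RP^3$ because $\RP^3$ admits an orientation-reversing self-diffeomorphism; your unwinding of this through Lemma~\ref{lem:glue} is the same content, just made explicit.

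The one genuine error is the proposed witness in case (c): complex conjugation $(z_1,z_2)\mapsto(\bar z_1,\bar z_2)$ on $\mathbb{C}^2\cong\R^4$ negates two of the four real coordinates, so its determinant is $(-1)^2=+1$; it is therefore orientation-\emph{preserving} on $\R^4$, hence on $S^3$, hence on the quotient $\RP^3$. To get the orientation-reversing diffeomorphism you actually need, conjugate a single coordinate, $(z_1,z_2)\mapsto(\bar z_1, z_2)$, or equivalently take the map $[x_1:x_2:x_3:x_4]\mapsto[x_1:x_2:x_3:-x_4]$; either commutes with the antipodal map on $S^3$ and descends to an orientation-reversing self-diffeomorphism of $\RP^3$. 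You may also simplify the reduction: you do not need the diffeomorphism to restrict to the antipodal map on a ball, only to preserve some embedded ball setwise (achievable by the disk theorem followed by isotopy extension); since the map reverses the global orientation and preserves the ball, its restriction to the bounding $2$-sphere is automatically orientation-reversing, which is all Lemma~\ref{lem:glue} requires. With that fix the argument is complete and matches the paper's.
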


\begin{proof} To begin with, note there are precisely two isomorphism types of sphere bundles with total space $S^2$:  they are $S^2\rightarrow S^2\rightarrow \{p\}$, and $S^0\rightarrow S^2\rightarrow \mathbb{R}P^2$.  Since a diffeomorphism of either $S^0$ or $S^1$ extends to a diffeomorphism of the corresponding disk, both of these extend uniquely to disk bundles.  Moreover, $\operatorname{Diff}(S^2)$ deformation retracts to $O(2)$ \cite{Sm}, so we may assume our gluing map is either the identity or the antipodal map.   Both options extend to a diffeomorphism of the $3$-ball $B^3$, so by Lemma \ref{lem:glue} the choice of gluing map is irrelevant if either $B_\pm = \{p\}$.

If we have $B_+ = B_- = \{p\}$, then $M$ is obtained by gluing two $3$-balls along their boundary $S^2$, so $M$ is diffeomorphic to $S^3$ in this case.  If we have $B_+  = \{p\}$ and $B_- = \mathbb{R}P^2$, then gluing gives $\mathbb{R}P^3$.  Finally, if we have $B_+ = B_- = \mathbb{R}P^2$, we obtain $\mathbb{R}P^3 \# \pm \mathbb{R}P^3$.  But $\mathbb{R}P^3$ admits an orientation reversing diffeomorphism, so $\mathbb{R}P^3 \# - \mathbb{R}P^3$ is diffeomorphic to $\mathbb{R}P^3\sharp \mathbb{R}P^3$.

\end{proof}

We now classify all double disk bundles with regular leaf $L = T^2$ and with at least one $\ell_\pm > 0$, which completes the proof of Theorem \ref{thm:3mfld}.

\begin{proposition}  Suppose $M$ admits a double disk bundle structure with regular leaf $L = T^2$ and with $\ell_+ > 0$.  Then either $\pi_1(M)$ is abelian, or $M$ is a prism manifold.

\end{proposition}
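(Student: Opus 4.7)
The plan is to enumerate the possible double disk bundle decompositions with $L = T^2$ and $\ell_+ > 0$, and to compute $\pi_1(M)$ in each case via Seifert--van Kampen. First, since $\dim L = 2$ and $L \not\cong S^2$, the hypothesis $\ell_+ > 0$ forces $\ell_+ = 1$, so $B_+$ is a closed $1$-manifold and hence $B_+ \cong S^1$. Of the two $D^2$-bundles over $S^1$ only the trivial one has boundary $T^2$, so $DB_+ \cong D^2 \times S^1$ is a solid torus. If in addition $\ell_- > 0$, the same argument shows $DB_-$ is also a solid torus, so $M$ has a genus-one Heegaard decomposition; a direct Seifert--van Kampen computation then produces a cyclic fundamental group.

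The remaining case is $\ell_- = 0$, where $DB_-$ is an interval bundle over a closed surface $B_-$. Connectedness of $L = T^2$ forces this bundle to be twisted, so $B_-$ is a closed surface admitting a connected double cover by $T^2$; thus $B_- \cong T^2$ or $B_- \cong K$ (the Klein bottle). In the $B_- = T^2$ subcase, every vertex group in the Seifert--van Kampen square is abelian, and $\pi_1(M)$ is abelian. In the $B_- = K$ subcase, I would take $\pi_1(K) = \langle a, b \mid aba^{-1}b\rangle$ and note that the index-$2$ subgroup corresponding to the orientation double cover $T^2 \to K$ is the abelian subgroup $\langle a^2, b\rangle$ (the commutation $[a^2, b] = 1$ follows immediately from $aba^{-1} = b^{-1}$). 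The gluing diffeomorphism $f$ of $T^2$ then sends the meridian of $DB_+$ to some element $a^{2p}b^q$ of $\langle a^2, b\rangle$, and primitivity $\gcd(p, q) = 1$ is forced by $f_\ast \in \operatorname{GL}_2(\mathbb{Z})$. Seifert--van Kampen therefore produces
\[
  \pi_1(M) = \langle a, b \mid aba^{-1}b, \; a^{2p} b^q\rangle,
\]
which matches the paper's prism-manifold presentation upon setting $\beta = p$ and $\alpha = -q$. Since positive sectional curvature is assumed throughout this section, $\pi_1(M)$ is finite, and the classification of spherical space forms upgrades this fundamental-group match to a diffeomorphism of $M$ with a prism manifold in all cases where the presentation is non-abelian; the remaining cases give a cyclic group, i.e. a lens space, and $\pi_1(M)$ is abelian.

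The main obstacle will be the bookkeeping in the Klein-bottle subcase: correctly identifying $\pi_1(T^2) \hookrightarrow \pi_1(K)$ as $\langle a^2, b\rangle$, tracking how $f$ acts on this subgroup, and confirming that every coprime pair $(p, q)$ is actually realized by a valid gluing, so that the van Kampen presentation really does range over the full prism-manifold family as $f$ varies.
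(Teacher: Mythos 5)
Your approach is essentially the paper's: reduce to $\ell_+ = 1$, hence $DB_+$ a solid torus, split on $\ell_-$, identify $B_-\in\{S^1, T^2, K\}$ in the remaining cases, and run Seifert--van Kampen. Your route to the prism presentation is a genuine (minor) simplification: you push the meridian of $DB_+$ through the gluing directly into the index-$2$ subgroup $\langle a^2,b\rangle\subset\pi_1(K)$ and read off $\pi_1(M)=\langle a,b\mid aba^{-1}b,\,a^{2p}b^q\rangle$ with $\gcd(p,q)=1$ coming for free from primitivity. The paper instead writes out the gluing as a matrix $\begin{bmatrix}\alpha&\beta\\\gamma&\delta\end{bmatrix}\in\operatorname{GL}_2^+(\mathbb{Z})$ (after reducing to determinant $+1$ via a Lemma about extending a reflection over the solid torus), records a three-generator presentation $\langle a,d,e\mid ded^{-1}e,\,a^\alpha=d^2,\,a^\beta=e\rangle$, and then spends a paragraph eliminating $a$ using $\alpha\delta-\beta\gamma=1$. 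Your version avoids both the $\operatorname{GL}_2^+$ reduction and the generator elimination, landing on the same normal form.

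There is, however, a genuine gap in your handling of the $B_- = T^2$ subcase. The asserted reason --- ``every vertex group in the Seifert--van Kampen square is abelian, and $\pi_1(M)$ is abelian'' --- is false as a general principle: the pushout $\mathbb{Z}\ast_{\{1\}}\mathbb{Z}\cong F_2$ has abelian vertex groups but is not abelian. What actually makes the conclusion hold, and what the paper says explicitly, is that $\ell_+ = 1 > 0$ forces the sphere bundle map $\pi_1(L)\to\pi_1(B_+)$ to be surjective, so the pushout $\pi_1(B_+)\ast_{\pi_1(L)}\pi_1(B_-)$ is generated by the image of $\pi_1(B_-)$, i.e.\ $\pi_1(M)$ is a quotient of $\pi_1(B_-)$. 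This one observation handles $B_-\in\{S^1,T^2\}$ uniformly and abelianizes $\pi_1(M)$ in both; you in fact implicitly rely on it in the Heegaard and Klein-bottle computations, so make it explicit here too. A second, smaller point: the proposition as stated carries no curvature hypothesis, so invoking positive curvature to pass from ``$\pi_1(M)$ is a prism group'' to ``$M$ is a prism manifold'' quietly narrows the statement; the paper instead cites the classification of prism manifolds by fundamental group to make this step without curvature.
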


\begin{remark}  The classification of $3$-manifolds with $\pi_1(M)$ abelian is well known \cite[Section 1.7, Table 2]{AsFrWi}.  The only such examples which are covered by $S^3$ are the lens spaces $L(p,q)$.  Each of these is well-known to be a double disk bundle, e.g., they are all quotients of $S^3$ via a sub-action of the well-known cohomogeneity one action of $T^2$ on $S^3$.  The examples which are not covered by $S^3$ are covered by $S^2\times S^1$, so are all double disk bundles by Remark \ref{rem:nonneg}.
\end{remark}

\begin{proof}The assumption that $\ell_+ >0$ implies that $\ell_+ = 1$, so $B_+ = S^1$.  An $S^1$-bundle over $S^1$ is determined by an element of $\pi_0(\operatorname{Diff}(S^1))$.  Since $\operatorname{Diff}(S^1)$ deformation retracts to $O(2)$, there are precisely two $S^1$-bundles over $S^1$.  Of course, one has total space $K$, the Klein bottle.  Thus, there is a unique $S^1$ bundle over $S^1$ with total space $T^2$, the trivial bundle.

If $\ell_- = 2$, the fiber inclusion $S^2\rightarrow T^2$ must be an embedding, giving an obvious contradiction.  Hence, $\ell_- \in \{0,1\}$.  Of course, if $\ell_- = 1$, then the bundle $L\rightarrow B_-$ must be the trivial bundle as in the previous paragraph.  On the other hand, if $\ell_- = 0$, then $L\rightarrow B_-$ is a $2$-fold covering, so $B_-$ is diffeomorphic to either $T^2$ or $K$.

Each of these $S^1$-bundles extends to a disk bundle in a unique way.  In addition, $\operatorname{Diff}(T^2)$ deformation retracts to $GL_2(\mathbb{Z})$ \cite[Theorem 2.5]{FaMa}, so we can always assume our gluing map lies in $Gl_2(\mathbb{Z})$.  Moreover, the diffeomorphism $\begin{bmatrix} 1 & 0 \\ 0 & -1\end{bmatrix}$ of $T^2 = \partial (D^2\times S^1)$ extends to a diffeomorphism of $DB_+ \cong D^2\times S^1$, so Lemma \ref{lem:glue} implies that we may assume our gluing map lies in $Gl_2^+(\mathbb{Z})$.

Applying Siefert-van Kampen to the double disk decomposition of $M$, we note that since $\ell_+ = 1$, the map $\pi_1(L)\rightarrow \pi_1(B_+)$ is surjective.  This implies that $\pi_1(M)$ is isomorphic to a quotient of $\pi_1(DB_-) = \pi_1(B_-)$.  Thus, if $B_-\neq K$, then $\pi_1(M)$ is necessarily abelian.

So, we assume $B_- = K$, and that the gluing map is determined by a matrix $$\begin{bmatrix} \alpha & \beta \\ \gamma & \delta\end{bmatrix} \in Gl_2^+(\mathbb{Z}).$$  We have presentations $$\pi_1(S^1) = \langle a\rangle, \pi_1(T^2) \cong \langle b,c| [b,c] = 1\rangle, \text{ and } \pi_1(K)  = \langle d,e| ded^{-1}e = 1\rangle.$$  The unique abelian index $2$ subgroup of $\pi_1(K)$ is generated by $\{d^2,e\}$.  We may therefore assume the map $\pi_1(T^2)\rightarrow \pi_1(K)$ maps $b$ to $d^2$ and $c$ to $e$, and that the map $\pi_1(T^2)\rightarrow \pi_1(S^1)$ maps $b$ to $a$ and $c$ to the identity element.

Note that under the gluing map $\begin{bmatrix} \alpha & \beta\\ \gamma & \delta\end{bmatrix}$, the map $\pi_1(T^2)\xrightarrow{\begin{bmatrix} \alpha & \beta\\ \gamma & \delta\end{bmatrix}} \pi_1(T^2)\rightarrow \pi_1(S^1)$ is therefore given by $b\mapsto b^{\alpha}c^{\gamma}\mapsto a^{\alpha}$, and $c\mapsto  b^{\beta} c^{\delta} \mapsto a^{\beta}$, where we have used multiplicative notation rather than additive for both $\pi_1(T^2)\cong \mathbb{Z}^2$ and $\pi_1(S^1)\cong \mathbb{Z}$.  Thus, Seifert-van Kampen gives $$\pi_1(M)\cong \langle a,d,e| ded^{-1}e = 1, a^{\alpha} = d^2, a^{\beta} = e\rangle.$$  We claim that this is isomorphic to $$\langle d,e| ded^{-1} e = 1, d^{2\beta} = e^{\alpha}\rangle,$$ so that $M$ has the fundamental group of a prism manifold.

To that end, we first note that the generator $a$ in the first presentation is unnecessary.  Indeed, we have $\alpha \delta - \beta\gamma = 1$, so $$a^{ 1} = a^{\alpha \delta - \beta\gamma} = (a^{\alpha})^{\delta} (a^{\beta})^{-\gamma} = d^{2\delta}e^{-\gamma}.$$  Thus, we need only demonstrate that the relations in the first presentation are consequences of the relations in the second, and vice versa.

So, assume initially that both $a^{\alpha} = d^2$ and $a^{\beta} = e$.  Raising the first relation to the power of $\beta$, and the second to the power of $\alpha$, we obtain  $$d^{2\beta} = a^{\alpha\beta} = e^{\alpha},$$ so the relations in the first presentation imply those in the second.  Conversely, assuming $d^{2\beta} = e^{\alpha}$, noting that $d^2$ commutes with everything, and setting $a= d^{2\delta}e^{-\gamma}$, we find \begin{align*} a^{\alpha} &= d^{2\alpha\delta} e^{-\gamma \alpha}\\ &= d^{2(1+\beta\gamma)}e^{-\gamma\alpha}\\ &= d^2 (d^{2\beta})^{\gamma} (e^{\alpha})^{-\gamma}  \\ &= d^2 (e^{\alpha})^{\gamma} (e^{\alpha})^{-\gamma}\\ &= d^2\end{align*} and likewise, we find that $a^{\beta} = e$.

Thus, $\pi_1(M)$ is isomorphic to the fundamental group of a prism manifold, as defined above.  Since such manifolds are classified up to diffeomorphism by their fundamental group \cite[Theorem 2.2]{AsFrWi}, $M$ must be a prism manifold in these cases.

\end{proof}

We conclude this section by proving that the three homogeneous examples $S^3/T^\ast$, $S^3/O^\ast$, and $S^3/I^\ast$ of Corollary \ref{cor:inf} are the only homogeneous examples in any dimension which are covered by a sphere but are not double disk bundles.

\begin{proposition}\label{prop:cantwork}  Suppose $M$ is a closed homogeneous space which is covered by a sphere.  Then $M$ admits a double disk bundle decomposition, except when $M$ is diffeomorphic to one of $S^3/T^\ast, S^3/O^\ast,$ or $S^3/I^\ast$.

\end{proposition}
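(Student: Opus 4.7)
The plan is to combine Wolf's classification of homogeneous spherical space forms \cite[Sections 7.4--7.6]{Wo} with the theorem that every closed cohomogeneity-one manifold is a double disk bundle \cite{Mo}. Since $M$ is closed and covered by a sphere $S^n$, the covering is automatically finite, so $M \cong S^n/\Gamma$ for a finite subgroup $\Gamma \subseteq \Or(n+1)$ acting freely; homogeneity of $M$ forces $\Gamma$ to normalise a closed connected subgroup $G \subseteq \Or(n+1)$ that acts transitively on $S^n$, which restricts $\Gamma$ to a short list.

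For $n = 2$ the only possibilities are $S^2$ and $\RP^2$, both visibly double disk bundles. For $n = 3$, combining Theorem \ref{thm:3mfld} with \cite[Table 1]{Mc} handles the claim: the homogeneous $3$-manifolds covered by $S^3$ are precisely the lens spaces $L(p,1)$, the prism manifolds $S^3/D_{4n}^\ast$, and the three exceptions, and the first two families are double disk bundles by Theorem \ref{thm:3mfld}.

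For $n \geq 4$, Wolf's classification reduces $M$ to one of: (a) $S^n$; (b) $\RP^n$; (c) a lens space $S^{2k+1}/\Z_p$ with $k \geq 2$, where $\Z_p \subseteq S^1$ acts diagonally on $\C^{k+1}$; or (d) a quaternionic space form $S^{4k+3}/\Gamma$ with $k \geq 1$ and $\Gamma$ a finite subgroup of $\Sp(1)$ acting on $\HH^{k+1}$ by right multiplication (possibly combined with a commuting diagonal cyclic factor in $\Un(1)$). Case (a) is immediate, and in (b) the space $\RP^n$ is the union of a closed $n$-disk with the total space of the tautological disk bundle over $\RP^{n-1}$. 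For (c), the block-diagonal action of $\Un(1) \times \Un(k) \subseteq \Un(k+1)$ realises the join decomposition $S^{2k+1} = S^1 \ast S^{2k-1}$ with cohomogeneity one, and since the diagonal $\Z_p$ is central in $\Un(k+1)$, the action descends to $M$. For (d), the left action of $\Sp(1) \times \Sp(k) \subseteq \Sp(k+1)$ realises the join decomposition $S^{4k+3} = S^3 \ast S^{4k-1}$ with cohomogeneity one, and commutes with right multiplication by $\Gamma$ because left and right quaternionic multiplication commute. In each case Mostert's theorem \cite{Mo} supplies a double disk bundle structure.

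The main obstacle is verifying that Wolf's classification produces no additional homogeneous quotients in dimension $n \geq 4$ beyond (a)--(d) — in particular, that the exceptional transitive actions of $\Gtwo$ on $S^6$, $\Spin(7)$ on $S^7$, and $\Spin(9)$ on $S^{15}$ yield only $S^n$ and $\RP^n$. This holds because the normaliser of each of these subgroups in $\Or(n+1)$ is generated by the subgroup together with the centre of $\Or(n+1)$, so the only non-trivial free quotients they produce are antipodal. Once this enumeration is pinned down, the case analysis above completes the proof.
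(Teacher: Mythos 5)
Your proof is correct and follows essentially the same strategy as the paper: classify the homogeneous spherical space forms (you cite Wolf's classification, the paper cites Wilking--Ziller's Table 2, but these agree) and observe that for each quotient the isotropy action of $\Un(1)\times\Un(k)$ or $\Sp(1)\times\Sp(k)$ is cohomogeneity one and commutes with the deck group, reducing the question to dimension $3$ where Theorem~\ref{thm:3mfld} applies. The paper packages cases (b), (c), (d) under a single $\mathbb{K}\in\{\mathbb{R},\mathbb{C},\mathbb{H}\}$ umbrella and so never has to discuss the exceptional transitive actions separately, and in your final paragraph one should speak of the \emph{centralizer} rather than the normalizer of $\Gtwo$, $\Spin(7)$, $\Spin(9)$ (the deck group of a homogeneous quotient must commute with the connected transitive group); but these are cosmetic differences and the substance of the argument is the same.
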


\begin{proof}  From \cite[Table 2]{WiZi}, we see that the homogeneous spaces non-trivially covered by a sphere are a) real projective space, b) a homogeneous lens space, or c) a quotient of $S^{4n-1}\subseteq \mathbb{H}^n$ by a non-abelian finite subgroup of $Sp(1)$ acting diagonally.  Here, a homogeneous lens space is a quotient $S^{2n+1}/(\mathbb{Z}/m\mathbb{Z})$ where $\mathbb{Z}/m\mathbb{Z} = \{(z,z,...,z)\in \mathbb{C}^{n+1}: z^m = 1\}$, and $\mathbb{H}$ denotes the skew-field of quaternions.

We have a uniform description of these actions:  let $\mathbb{K}\in \{ \mathbb{R},\mathbb{C},\mathbb{H}\}$ and set $k = \dim_{\mathbb{R}}(\mathbb{K})$.  Let $G$ denote any finite subgroup of $O(1),U(1)$ or $Sp(1)$ respectively.  Then $G$ acts freely on $S^{kn-1}\subseteq \mathbb{K}^n$ via the diagonal action in each coordinate and the cases a),b), and c) above correspond to the choice of $\mathbb{K}$.

We first claim that if $n\geq 2$ then all such quotients $S^{nk-1}/G$ admit a double disk bundle decomposition.  Indeed, one can simply observe that the block action by $O(n-1)\times O(1)$, $U(n-1)\times U(1)$, or $Sp(n-1)\times Sp(1)$  on $S^{nk-1}\subseteq\mathbb{K}^n = \mathbb{K}^{n-1}\oplus \mathbb{K}$ is cohomogeneity one, and $G$ acts via a subaction of the block action.

This leaves the case $n=1$, which gives the manifolds $S^0/G$, $S^1/G$, or $S^3/G$.  Of course, the first is $0$-dimensional, and any quotient $S^1/G$ is diffeomorphic to $S^1$, and thus admits a double disk bundle decomposition.  The final case $S^3/G$ is given by Corollary \ref{cor:inf}.
\end{proof}

\section{Flat examples} The goal of this section is to prove the following theorem.

\begin{theorem}\label{thm:flat}  There are infinitely many closed flat manifolds, in arbitrarily large dimension, which are not double disk bundles.

\end{theorem}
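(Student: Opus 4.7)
The plan is to combine the structural results of Section 3 with Belegradek's construction \cite{Be} of closed flat manifolds with vanishing first homology. The argument splits into a short reduction, plus an existence input that I would not reprove here.

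First I would show that any closed flat manifold $M$ which is a double disk bundle must satisfy $H_1(M;\mathbb{Z})\twoheadrightarrow \mathbb{Z}/2\mathbb{Z}$. Since $M$ is flat, its universal cover is $\mathbb{R}^n$, so $M$ is aspherical. Proposition \ref{prop:flat} then forces $\ell_- = \ell_+ = 0$ in any double disk bundle decomposition. Applying Proposition \ref{prop:onecodim0} with $\ell_- = 0$ yields a nontrivial connected double cover $\overline{M}\to M$, which corresponds to an index-two (hence normal) subgroup of $\pi_1(M)$, equivalently to a surjection $\pi_1(M)\twoheadrightarrow \mathbb{Z}/2\mathbb{Z}$. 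Since the target is abelian, this surjection factors through the abelianization $\pi_1(M)^{\mathrm{ab}}\cong H_1(M;\mathbb{Z})$, giving the claim. Contrapositively, any closed flat manifold with $H_1(M;\mathbb{Z}) = 0$ fails to be a double disk bundle.

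Next I would invoke Belegradek's construction \cite{Be} to produce closed flat manifolds with $H_1(M;\mathbb{Z}) = 0$ in arbitrarily high dimensions. Combined with the previous step, none of these are double disk bundles. Bieberbach's first theorem bounds the number of diffeomorphism types of closed flat manifolds in any fixed dimension, so examples in unboundedly many dimensions are automatically infinitely many in total; this delivers both conclusions of the theorem at once.

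The genuine obstacle is not in the reduction above, which is essentially a bookkeeping step built from Section 3 and the Galois correspondence between index-two subgroups of $\pi_1(M)$ and nonzero classes in $H^1(M;\mathbb{Z}/2\mathbb{Z})$. It lies in the existence claim itself: constructing closed flat manifolds with trivial first integral homology in arbitrary dimension is a delicate problem in Bieberbach group theory (one needs torsion-free crystallographic groups whose abelianization is finite of odd order, in fact trivial), and Theorem \ref{thm:flat} is essentially a repackaging of \cite{Be} through the lens of double disk bundles.
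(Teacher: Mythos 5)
Your reduction is exactly the paper's Proposition \ref{prop:flatcondition}: flatness gives asphericality via Cartan--Hadamard, Proposition \ref{prop:flat} forces $\ell_- = \ell_+ = 0$, and Proposition \ref{prop:onecodim0} then yields a double cover, hence a surjection $H_1(M)\twoheadrightarrow \mathbb{Z}/2\mathbb{Z}$. The only difference is in how you handle the existence input: the paper does not merely cite \cite{Be} as a black box but reproduces the argument (Theorem \ref{thm:perfect}, via Auslander--Kuranishi and the Hiller--Plesken lemma on commutator subgroups of Bieberbach groups) and then pins down the dimension growth explicitly by taking holonomy $A_n$ for $n\geq 7$ and using the fact that the smallest nontrivial real representation of $A_n$ has dimension $n-1$, so $\dim M_{A_n}\geq n-1$. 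Your alternative route to ``infinitely many'' via Bieberbach finiteness (finitely many crystallographic groups per dimension) is a valid shortcut, but it still presupposes that the cited construction really does produce examples in unboundedly many dimensions --- which is the content the paper supplies with the $A_n$ representation-theoretic bound. So: correct, same approach, with you outsourcing the one step the paper spells out.
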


We begin with a proposition which allows us to recognize when a flat manifold does not admit a double disk bundle decompositoin.

\begin{proposition}\label{prop:flatcondition}  Suppose $M$ is a closed flat manifold with $H_1(M)$ finite of odd order.  Then $M$ cannot admit a double disk bundle decomposition.
\end{proposition}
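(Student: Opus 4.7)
The plan is to combine Propositions \ref{prop:flat} and \ref{prop:onecodim0} with the elementary observation that an index-$2$ subgroup of $\pi_1(M)$ gives rise to a surjection $H_1(M)\twoheadrightarrow \mathbb{Z}/2\mathbb{Z}$, which is incompatible with $|H_1(M)|$ being odd.

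First, I would argue for a contradiction and assume that $M$ admits a double disk bundle decomposition $M=DB_-\cup_f DB_+$ with fiber sphere dimensions $\ell_\pm$. Since $M$ is a closed flat manifold, its universal cover is $\mathbb{R}^n$, so $M$ is aspherical. Proposition \ref{prop:flat} then forces $\ell_-=\ell_+=0$; in particular $\ell_-=0$, so the hypothesis of Proposition \ref{prop:onecodim0} is satisfied.

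Next, I would apply Proposition \ref{prop:onecodim0} to conclude that $M$ admits a non-trivial connected double cover $\overline{M}\to M$. Such a cover corresponds to an index-$2$ subgroup $H\leq \pi_1(M)$. Any index-$2$ subgroup is normal, so the quotient gives a surjective homomorphism $\pi_1(M)\twoheadrightarrow \mathbb{Z}/2\mathbb{Z}$. Since the target is abelian, this homomorphism factors through the abelianization, yielding a surjection $H_1(M;\mathbb{Z})\twoheadrightarrow \mathbb{Z}/2\mathbb{Z}$. But a finite group of odd order cannot surject onto $\mathbb{Z}/2\mathbb{Z}$, contradicting the hypothesis that $H_1(M)$ is finite of odd order.

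There is essentially no obstacle beyond correctly invoking the two earlier propositions in sequence; the proof is a short chain of implications. The only thing to verify carefully is that Proposition \ref{prop:onecodim0} genuinely requires just $\ell_-=0$ (which it does), so that after reducing to both $\ell_\pm=0$ via Proposition \ref{prop:flat} we may apply it without further hypothesis. No input about the full structure of flat manifolds or Bieberbach groups is needed beyond asphericity.
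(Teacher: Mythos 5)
Your proof is correct and follows exactly the same route as the paper's: use flatness to get asphericity, invoke Proposition \ref{prop:flat} to force $\ell_\pm=0$, invoke Proposition \ref{prop:onecodim0} to get a nontrivial double cover, and derive a surjection $H_1(M)\twoheadrightarrow\mathbb{Z}/2\mathbb{Z}$ contradicting the odd-order hypothesis. No differences to note.
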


\begin{proof}  Assume for a contradiction that $M$ admits a double disk bundle decomposition.  Since $M$ is flat, the Cartan-Hadamard theorem implies that $M$ is aspherical.  Thus, Proposition \ref{prop:flat} applies: any double disk bundle decomposition on $M$ must have both $\ell_\pm = 0$.  Then, from Proposition \ref{prop:onecodim0}, $M$ admits a non-trivial double cover.  In particular, $\pi_1(M)$ must have an index $2$ subgroup, so admits a surjection to $\mathbb{Z}/2\mathbb{Z}$.  Since $H_1(M)$ is the abelianization of $\pi_1(M)$, this surjection must factor through $H_1(M)$.  But no finite group of odd order admits a surjection to $\mathbb{Z}/2\mathbb{Z}$, giving a contradiction.

\end{proof}






In order to prove Theorem \ref{thm:flat}, we need only establish the existence of infinitely many flat manifolds $M$ in arbitrarily large dimensions with first homology group $H_1(M)$ finite of odd order.  In fact, we will find examples with $H_1(M)$ trivial.  As $H_1(M)$ is the abelianization of $\pi_1(M)$, we are thus tasked with finding an infinite family of flat manifolds for which $\pi_1(M) = [\pi_1(M),\pi_1(M)]$ is perfect.  These examples are furnished by the following theorem.

\begin{theorem}\label{thm:perfect}  Suppose $\phi$ is any finite perfect group.  Then there is a closed flat manifold $M_\phi$ for which $H_1(M_\phi) = 0$ and for which $M_\phi$ has holonomy $\phi$.
\end{theorem}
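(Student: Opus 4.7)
The plan is to realize $M_\phi$ as a Riemannian quotient $\mathbb{R}^N/\Gamma$, where $\Gamma$ is a Bieberbach group fitting into a short exact sequence
\[
0 \to L \to \Gamma \to \phi \to 1
\]
with $L \cong \mathbb{Z}^N$ the translation lattice and $\phi$ acting on $L$ by a faithful integral representation. The holonomy of $M_\phi$ is then automatically $\phi$, so the problem reduces to constructing such a $\Gamma$ that is simultaneously torsion-free (so $M_\phi$ is a manifold) and has trivial abelianization (so $H_1(M_\phi;\mathbb{Z})=0$). By the classical Auslander--Kuranishi theorem every finite group arises as a flat holonomy group, so the existence of some extension with holonomy $\phi$ is known; the new content of the theorem is imposing the first-homology vanishing on top of this.

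To reduce $H_1(M_\phi)=0$ to a cohomological condition on the extension class, I would invoke the five-term exact sequence in homology for the above extension,
\[
H_2(\phi;\mathbb{Z}) \xrightarrow{d_2} L_\phi \to H_1(\Gamma;\mathbb{Z}) \to H_1(\phi;\mathbb{Z}) \to 0,
\]
where $L_\phi = L/I_\phi L$ is the module of coinvariants and $d_2$ is the transgression, given by cap product with the extension class $c \in H^2(\phi;L)$. Perfectness of $\phi$ makes $H_1(\phi;\mathbb{Z})=0$, so $H_1(\Gamma)$ becomes a quotient of $L_\phi$. A natural sufficient condition is therefore $L_\phi = 0$, and this can be arranged by taking $L$ to be a direct sum of copies of the augmentation ideal $I_\phi \subset \mathbb{Z}\phi$, since $(I_\phi)_\phi \cong I_\phi/I_\phi^2 \cong \phi^{ab} = 0$ whenever $\phi$ is perfect.

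The remaining task is torsion-freeness. By a standard cohomological criterion, $\Gamma$ is torsion-free iff, for every cyclic subgroup $C \leq \phi$ of prime order, the restriction $\operatorname{Res}^\phi_C c \in H^2(C;L)$ is non-zero. The main obstacle is a genuine friction between the two requirements: the augmentation-ideal choice that kills coinvariants also has vanishing $H^2$ on prime-order cyclic subgroups, by the dimension-shifting long exact sequence of $0 \to I_\phi \to \mathbb{Z}\phi \to \mathbb{Z} \to 0$ together with the fact that $\mathbb{Z}\phi$ restricted to $C$ is a free $\mathbb{Z}C$-module. I would resolve this by taking $L$ to be a carefully chosen direct sum of $\phi$-lattices: some summands (built from augmentation ideals) ensuring $L_\phi = 0$, together with other summands providing the prime-order cohomological restrictions needed for torsion-freeness in the Auslander--Kuranishi style. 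Concretely, one enlarges $L$ enough that the restriction map $H^2(\phi;L) \to \prod_{[C]} H^2(C;L)$ is surjective onto the subset of classes with nonzero restriction at each prime-order $C$, and then picks $c$ hitting any such class; the coinvariant contributions from the extra summands can be killed either by taking those summands also to have trivial coinvariants or by checking $d_2$ surjects on them. The technical heart is verifying that both conditions can be met by a single class $c$, which I expect to follow from a direct construction mirroring Belegradek's argument cited in the paper.
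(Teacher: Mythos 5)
Your approach is genuinely different from the paper's, and it has an unresolved gap at the key existence step.

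The paper's proof (attributed to Belegradek) is a short algebraic reduction with no lattice or extension class ever chosen by hand: take the torsion-free crystallographic group $\pi$ supplied by Auslander--Kuranishi with holonomy $\phi$, and pass to its commutator subgroup $\pi'=[\pi,\pi]$. Being a subgroup of the torsion-free group $\pi$, the group $\pi'$ is torsion-free; by \cite[Proposition 2.3.13]{HoPl} it is again crystallographic with holonomy $[\phi,\phi]=\phi$, and it is perfect, so $H_1(M_\phi)=(\pi')^{\mathrm{ab}}=0$. Your proposal instead tries to build $\Gamma$ directly, controlling $H_1$ via the five-term exact sequence and torsion-freeness via the criterion on restrictions of the extension class to prime-order cyclic subgroups.

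The gap is exactly the ``friction'' you identify and then defer. Your own dimension-shift computation shows $H^2(C;I_\phi)\cong H^1(C;\mathbb{Z})=0$ for every finite cyclic $C\le\phi$, so the augmentation-ideal summands that kill coinvariants contribute nothing toward torsion-freeness. Writing $L=I_\phi^{\oplus k}\oplus L_2$ therefore gives $L_\phi=(L_2)_\phi$ and $H^2(C;L)\cong H^2(C;L_2)$: the $I_\phi$-summands change neither side of the tension, and the entire burden falls on $L_2$. To finish you would need to exhibit a single $\phi$-lattice $L_2$ and class $c\in H^2(\phi;L_2)$ which simultaneously restricts nontrivially to every prime-order cyclic subgroup and makes the transgression $d_2\colon H_2(\phi;\mathbb{Z})\to (L_2)_\phi$ surjective; that construction is asserted, not carried out. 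Your closing appeal to ``a direct construction mirroring Belegradek's argument'' also misidentifies that argument: Belegradek's trick is the commutator-subgroup reduction above, which sidesteps the coinvariant/restriction tension entirely rather than resolving it inside a direct construction. The strategy may be completable, but as written the proof is incomplete precisely where the theorem's content lies.
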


Recall that the alternating group on $n$ letters, $A_n$ is perfect if $n\geq 4$.  We claim that for $n\geq 7$, that $\dim M_{A_n}\geq n-1$, so Theorem \ref{thm:flat} immediately follows from Proposition \ref{prop:flatcondition} and Theorem \ref{thm:perfect}.  Indeed, the holonomy group of an $n$-manifold is a subgroup of the orthogonal group $O(n)$, and for $n\geq 7$, the smallest non-trivial representation of $A_n$ occurs in dimension $n-1$ \cite[Problem 5.5]{FuHa}.

Thus, to prove Theorem \ref{thm:flat}, we need only to prove Theorem \ref{thm:perfect}.  We do this using an argument due to Igor Belegradek \cite{Be}. 

We will use the following characterization of the fundamental group of a closed flat manifold.

\begin{theorem}[Bieberbach\cite{Bi} and Auslander-Kuranishi \cite{AusKur}]\label{thm:char}

An abstract group $\pi$ is the fundamental group of a closed flat $n$-manifold if and only if both of the following conditions are satisfied.

\begin{enumerate}\item  $\pi$ is torsion free

\item  $\pi$ fits into a short exact sequence of the form $0\rightarrow \mathbb{Z}^n\rightarrow \pi \rightarrow \phi\rightarrow 0$, where $\phi$ is a finite group.

\end{enumerate}

\end{theorem}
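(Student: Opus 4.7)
The plan is to prove the two implications of Theorem~\ref{thm:char} separately, since each direction is due to a different classical result.

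For the forward direction, suppose $\pi = \pi_1(M)$ for $M$ a closed flat $n$-manifold. By the Cartan-Hadamard theorem the universal cover is isometric to $\R^n$ with its flat metric, so $\pi$ acts freely, properly discontinuously and cocompactly by isometries. Using the identification $\Isom(\R^n) = \R^n \rtimes \Or(n)$, each element of $\pi$ has the form $(v,A)$. Torsion-freeness is immediate: any finite-order element $(v,A)$ fixes the barycenter of an orbit, contradicting freeness. The existence of the required short exact sequence is precisely Bieberbach's first theorem, which asserts that the translation subgroup $T := \pi \cap \R^n$ is a lattice of rank $n$ and of finite index in $\pi$. I would give the classical commutator proof: if $\gamma = (v,A)$ has rotation part close to the identity and $\tau = (w,I) \in T$ is short, then the commutator $[\gamma,\tau] = ((I - A)w,I)$ is a strictly shorter translation, which, iterated, contradicts discreteness; thus nontrivial rotation parts stay bounded away from $I$, so the image of $\pi$ in the compact group $\Or(n)$ is discrete and therefore finite. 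Cocompactness of the $T$-action then forces $T$ to have full rank~$n$.

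For the reverse direction (Auslander-Kuranishi), start with $0 \to \Z^n \to \pi \to \phi \to 0$ where $\phi$ is finite and $\pi$ is torsion-free. Conjugation of $\pi$ on the abelian normal subgroup $\Z^n$ factors through $\phi$, giving a representation $\rho \colon \phi \to GL_n(\Z)$. Averaging the standard inner product on $\R^n$ over $\phi$ produces a $\phi$-invariant inner product, so after a change of basis $\rho$ factors through $\Or(n)$. The extension $\pi$ is classified by a class $[c] \in H^2(\phi;\Z^n)$; to embed $\pi$ into $\Isom(\R^n) = \R^n \rtimes \Or(n)$ as an extension with translation subgroup~$\Z^n$, it suffices to show that the image of $[c]$ in $H^2(\phi;\R^n)$ vanishes. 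This is the crux of the argument: $H^k(\phi;\R^n)$ is annihilated by $|\phi|$ (standard transfer) and is also uniquely $|\phi|$-divisible (because $\R^n$ is a $\Q$-vector space), hence is zero for $k \geq 1$. Therefore $[c]$ becomes a coboundary over $\R$, and modifying the representative cocycle accordingly realizes $\pi$ as a subgroup of $\R^n \rtimes \phi \subseteq \Isom(\R^n)$.

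It remains to verify that this embedding yields the fundamental group of a closed flat manifold. The action of $\pi$ on $\R^n$ is by isometries, cocompact (since $\R^n/\Z^n$ is compact and $[\pi:\Z^n] = |\phi| < \infty$), and properly discontinuous. Freeness follows from torsion-freeness: any element fixing a point is conjugate in $\Isom(\R^n)$ to a pure rotation, hence has finite order, hence is trivial. Then $M := \R^n/\pi$ is a closed flat $n$-manifold with $\pi_1(M) = \pi$. The main obstacle I anticipate is the forward (Bieberbach) step, whose commutator/discreteness argument is the only genuinely delicate part; the reverse direction reduces cleanly to the vanishing of finite-group cohomology with $\Q$-coefficients, together with the standard averaging trick producing a $\phi$-invariant inner product.
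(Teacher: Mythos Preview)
The paper does not prove Theorem~\ref{thm:char} at all: it is quoted as a classical result, with citations to Bieberbach~\cite{Bi} and Auslander--Kuranishi~\cite{AusKur}, and then used as a black box in the proof of Theorem~\ref{thm:perfect}. So there is no argument in the paper to compare your proposal against.

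On its own merits, your outline follows the standard route and is essentially correct. Two small points are worth tightening. In the forward (Bieberbach) direction, your commutator-shrinking argument presupposes a nontrivial translation $\tau\in T=\pi\cap\R^n$ to start with; the genuine proof must either first produce such translations or, more commonly, run the commutator estimate on pairs of general elements with rotation parts near the identity, controlling both rotational and translational components simultaneously (this is the Zassenhaus--Margulis flavor of the argument you allude to as ``delicate''). In the reverse direction, your freeness step (``fixes a point $\Rightarrow$ conjugate to a pure rotation $\Rightarrow$ finite order'') tacitly uses that the rotation part already lies in the finite group $\rho(\phi)\subseteq\Or(n)$, since an arbitrary element of $\Or(n)$ need not have finite order; it would be cleaner to say that if $(v,A)$ fixes a point then $(v,A)^{|\phi|}$ is a translation fixing that point, hence trivial, so $(v,A)$ is torsion and therefore trivial. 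The cohomological embedding via $H^2(\phi;\R^n)=0$ is exactly the right mechanism.
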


The finite group $\phi$ is called the holonomy of $\pi$ as it is isomorphic to the holonomy group of the flat manifold $n$-manifold with fundamental group $\pi$.

We need a lemma, which is \cite[Proposition 2.3.13]{HoPl}.

\begin{lemma}\label{lem}  Suppose a group $\pi$ fits into a short exact sequence of the form $$0\rightarrow \mathbb{Z}^n\rightarrow \pi \rightarrow \phi\rightarrow 0$$ where $\phi$ is a finite group.  Then the commutator subgroup $\pi' = [\pi,\pi]$ also fits into a short exact sequence of the form $$0\rightarrow \mathbb{Z}^m\rightarrow \pi'\rightarrow \phi' = [\phi,\phi]\rightarrow 0.$$  In addition, if $\phi$ is perfect, then so is $\pi'$.

\end{lemma}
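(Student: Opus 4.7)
The plan is to split the statement into two parts: first establish the short exact sequence for $\pi'$, and then, under the additional assumption that $\phi$ is perfect, deduce that $\pi'$ is perfect.

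For the short exact sequence, set $A\cong \mathbb{Z}^n$ for the kernel of $\pi\to\phi$. The quotient map carries commutators to commutators, so it restricts to a homomorphism $\pi' \to \phi'$. This restriction is surjective because every element of $\phi'$ is a product of commutators $[\bar x, \bar y]$ in $\phi$, and each such commutator lifts to $[x,y]\in\pi'$ for any choice of lifts $x, y\in\pi$. Its kernel is $\pi'\cap A$, and since any subgroup of the free abelian group $A\cong\mathbb{Z}^n$ is free abelian of rank at most $n$, we get $\pi'\cap A\cong\mathbb{Z}^m$ for some $m\leq n$, yielding the required short exact sequence.

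For the perfectness claim, assume $\phi$ is perfect, so $\phi=\phi'$ and the induced map $\pi'\to\phi$ is surjective. I would first note that $\pi = A\pi'$: given $g\in\pi$, write $\bar g\in\phi$ as a product of commutators $\prod[\bar x_i, \bar y_i]$; then $g = \prod[x_i, y_i]\cdot a$ for some $a\in A$. Next, using that $A$ is abelian (so $[a,a']=1$ for $a,a'\in A$) and standard commutator identities such as $[xy, z]=x[y,z]x^{-1}[x,z]$ and $[x, yz]=[x,y]\,y[x,z]y^{-1}$, expanding a generic commutator $[g'_1 a_1, g'_2 a_2]$ with $g'_i\in\pi'$, $a_i\in A$ shows that every commutator in $\pi$ lies in $[\pi',\pi']\cdot [A,\pi']$, after absorbing conjugations by elements of $\pi'$ (which preserve both subgroups since $[A,\pi']$ is normal in $\pi$). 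Hence $\pi' = [\pi',\pi']\cdot [A,\pi']$, and perfectness of $\pi'$ reduces to showing $[A,\pi']\subseteq [\pi',\pi']$.

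For this final inclusion, view $A$ as a $\mathbb{Z}[\phi]$-module under the conjugation action. Surjectivity of $\pi'\to\phi$ identifies $[A,\pi']$ with the submodule $I\phi\cdot A$, where $I\phi\subseteq\mathbb{Z}[\phi]$ is the augmentation ideal. For any $b\in I\phi\cdot A$ (which lies in $\pi'$) and any $g'\in\pi'$, the commutator $[g',b]$ lies in $[\pi',\pi']$; rewritten additively in $A$, this reads $(\bar{g'}-1)\cdot b\in [\pi',\pi']$. Letting $g'$ range over $\pi'$ yields $I\phi\cdot(I\phi\cdot A) = I\phi^2 A \subseteq [\pi',\pi']$. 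The crucial input is the classical identity $I\phi/I\phi^2 \cong \phi^{\mathrm{ab}}$: perfectness of $\phi$ forces this quotient to vanish, giving $I\phi = I\phi^2$ in $\mathbb{Z}[\phi]$; hence $I\phi\cdot A = I\phi^2\cdot A \subseteq [\pi',\pi']$, completing the argument. The main obstacle I anticipate is executing the commutator-expansion step cleanly enough to see $\pi' = [\pi',\pi']\cdot[A,\pi']$; the rest rests on the clean algebraic identity $I\phi = I\phi^2$ for perfect $\phi$, which packages the perfectness hypothesis into a single statement about the integral group ring.
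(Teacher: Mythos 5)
Your proof is correct. Note that the paper itself does not prove this lemma; it cites it as \cite[Proposition 2.3.13]{HoPl}, so there is no in-paper argument to compare against. Your write-up supplies a complete, self-contained proof, and both halves are sound: the short exact sequence part correctly identifies the kernel of $\pi'\to\phi'$ as $\pi'\cap A$ and uses that subgroups of $\mathbb{Z}^n$ are free abelian; the perfectness part correctly establishes $\pi = A\pi'$ from surjectivity of $\pi'\to\phi$, uses the commutator identities (with $[\pi',\pi']$ and $[A,\pi']$ both normal in $\pi$, so their product is a subgroup absorbing the conjugations) to get $\pi'=[\pi',\pi']\cdot[A,\pi']$, and then reduces the inclusion $[A,\pi']\subseteq[\pi',\pi']$ to the identity $I\phi=I\phi^2$ for perfect $\phi$ via $I\phi/I\phi^2\cong\phi^{\mathrm{ab}}$. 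Treating $A$ as a $\mathbb{Z}[\phi]$-module and packaging the perfectness hypothesis into the augmentation-ideal identity is the natural and standard route, and the only care required is the bookkeeping you already flagged in the commutator expansion step; that step does go through as described.
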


We may now prove Theorem \ref{thm:perfect}.

\begin{proof}(Proof of Theorem \ref{thm:perfect})
Let $\phi$ denote any finite perfect group.  From \cite[Theorem 3]{AusKur} there is an abstract group $\pi$ satisfying both conditions of Theorem \ref{thm:char}.  The commutator $\pi' = [\pi,\pi]$ is a subgroup of the torsion free group $\pi$, so is torsion free.  From Lemma \ref{lem}, $\pi'$ is also perfect, and satisfies the second condition of Theorem \ref{thm:char} with finite quotient $\phi' = [\phi,\phi] = \phi$.  Hence, by Theorem \ref{thm:char}, there is a flat manifold $M_{\phi}$ with fundamental group $\pi'$.  Since $\pi'$ is perfect, $H_1(M_{\phi}) = 0$.
\end{proof}

\bibliographystyle{alpha}
\bibliography{bibliography}

\end{document}